	\definecolor{mycitecolor}{rgb}{.1,.5,.1}
	\definecolor{mylinkcolor}{rgb}{.1,.1,.6} 
	\definecolor{myblue}{rgb}{.3,.3,.9}
	\definecolor{christophe}{rgb}{1,.2,.2}
	\definecolor{aaron}{rgb}{.2,.2,1}
\numberwithin{equation}{section}
\theoremstyle{plain}
	\newtheorem{theo}{Theorem}[section]
	\newtheorem{prop}[theo]{Proposition}
	\newtheorem{coro}[theo]{Corollary}
	\newtheorem{prob}{Problem}
	\newtheorem{lemm}[theo]{Lemma}
\theoremstyle{definition}
	\newtheorem{defi}[theo]{Definition}
	\newtheorem{exam}[theo]{Example}
\theoremstyle{remark}
	\newtheorem*{rema}{Remark}
\newcommand{\demph}[1]{\emph{#1}}  
\newcommand{\auxtrans}[3]{T_{#1,#3}} 
\newcommand{\revtrans}[3]{T^{{}^{\,\mathsf{v}}}_{#1,{#3}^\ast}} 
\def\bs{\boldsymbol}
\def\qand{\quad\hbox{and}\quad}
\def\into{{\hookrightarrow}}
\def\shuf{\mathop{\sqcup\!\sqcup}}
\def\infl{\mathop{\bs\uparrow}}
\def\Mag{{\mathsf M}}
\def\img{{\mathsf{img}\,}}
\def\rnk{{\mathsf{rnk}\,}}
\def\circc{\mathop{\scriptstyle\stackrel{\circ}{}}}
\renewcommand{\red}{{\mathop{\mathsf{red}}}}
\def\sbar#1{{{\scriptstyle\,}\overline{#1}{\scriptstyle\,}}}
\def\inv{{}^{-1}}
\renewcommand{\arg}{{\bs\cdot}} 
\newcommand{\supp}{\operatorname{\mathsf{supp}}}
\newcommand{\suppx}[1]{\supp(#1)}
\def\lparen{(\!(}
\def\rparen{)\!)}
\def\lskew{\makebox[1.0\width]{$\hbox{$<$}\!\!\!({\scriptscriptstyle\,}$}}
\def\rskew{\makebox[1.0\width]{${\scriptscriptstyle\,})\!\!\!\hbox{$>$}$}}
\def\llangle{\langle\!\langle}
\def\rrangle{\rangle\!\rangle}
\def\K{{k}} 
\def\N{{\mathbb N}}
\def\Z{{\mathbb Z}}
\newcommand{\poly}[1]{\K#1}
\newcommand{\ser}[1]{{\K}^{#1}}
\newcommand{\monser}[1]{\K\llangle #1 \rrangle}
\newcommand{\mn}[1]{{\K\lparen#1\rparen}}
\newcommand{\ff}[2][0ex]{{\K\kern #1\lskew#2\rskew}}
\def\fg{{\Gamma}}
\newcommand{\fgx}[1]{{\fg(#1)}}	
\def\w{\omega}
\def\uu{\upsilon}
\def\ixx{{X\cup{X}^{-1}}}
\def\bxx{{X\cup\overline{X}}}
\def\XX{{{(\bxx)}^{\ast}}}
\def\vv{\fg}
\def\ee{\mathcal{E}}
\def\cc{\mathcal{G}}
\def\pp#1{{\wp(#1)}}
\def\F{{\mathfrak{F}}}
\def\FF#1{{\left[\F,#1\right]}}
\def\suff#1{{\mathscr{S}(#1)}}
\def\oo#1{{{#1}_{\mathrm{o}}}}
\renewcommand{\prec}{\mathrel{<_{{}_\fg}}}
\renewcommand{\succ}{\mathrel{>_{{}_\fg}}}
\title[Rational series and the {C}onnes operator]{
Rational series in the free group and the {C}onnes operator
} 
\author{Aaron Lauve} \address[Aaron Lauve]{
	Department of Mathematics \& Statistics \\ 
	Loyola University Chicago \\ 
	1032 W Sheridan Road \\ 
	Chicago, IL 60660 \\ 
	USA} 
	\email{lauve@math.luc}  \urladdr{http://www.math.luc.edu/$\scriptstyle\sim$lauve}
\author{Christophe Reutenauer}\address[Christophe Reutenauer]{
	LaCIM\\ 
	Universit\'e\! du\! Qu\'ebec\! \`a\! Montr\'eal\\ 
	Case Postale 8888, succursale Centre-ville\\ 
	Montr\'eal\! (Qu\'ebec)\! H3C 3P8\\ 
	CANADA} 
	\email{reutenauer.christophe@uqam.ca}  \urladdr{http://www.lacim.uqam.ca/~christo/}
\date{July 4, 2012}
\begin{document}

\begin{abstract}
We characterize rational series over the free group by using an operator introduced by  A. Connes. We prove that rational Malcev--Neumann series posses rational expressions without simplifications. Finally, we develop an effective algorithm for solving the word problem in the free skew field.
\end{abstract}

\maketitle


\section{Introduction}\label{sec: intro}

Fix a field $\K$ and a finite set of indeterminants $X$. The free commutative field $\K(X)$ is defined as the initial object in the category of (commutative) fields $F$ with embeddings $\K[X]\into F$. It also has a well understood realization as the field of rational functions:
\[
\K(X) = \big\{ {f}/{g} \mid f,g\in \K[X],\, g\neq 0 \big\}.
\]
The focus of this paper is the noncommutative counterpart to $\K(X),$ \emph{the free (skew) field} $\ff{X}.$ It has an analogous categorical definition involving embeddings of $\K\langle X\rangle$, but one can hardly say that $\ff{X}$ is well understood. As an illustration, the reader may take a moment to verify that
\begin{equation}\label{eq: rational identity}
\big(x-z\inv\big)\big(1-yx\big)\inv\big(y-z\big)  + \big(y\inv-z\inv\big)\big(1-x\inv y\inv \big)\inv\big(x\inv-z\big) = 0
\end{equation}
without allowing the variables to commute. A commonly used realization of the free field, due to Lewin \cite{Lew:1974}, involves Malcev--Neumann series over the free group $\fgx{X}$ generated by $X$. These are noncommutative, multivariate analogs of Laurent series; we recall the details in Section \ref{sec: prelims}. 

While the Lewin realization of the free field is powerful, it suffers from an inconvenient asymmetry. We illustrate with an identity of Euler \cite{Car:2000}:
\[
\sum_{k\in\mathbb{Z}} x^k = 0.
\]
This looks absurd, but rewriting it as $\sum_{k\geq 0} {x}^{-k} \,+\, x\sum_{k\geq 0} x^k$, we recognize the geometric series expansions of $1/(1-x\inv)$ and $x/(1-x)$, respectively. Adding these fractions together does indeed give zero. Of course, $\K\llbracket \ixx \rrbracket$ is not a ring, so the mixing of series in $x$ and $x\inv$ is usually disallowed. Nevertheless, passing through steps such as this one is a useful technique for proving identities in the free field. 
(See Appendix \ref{sec: Euler}.)

In this paper, we give Euler's identity firm footing in the noncommutative setting. (See \cite{BecHaaSot:2009,BrlReu:2003} for more on the commutative setting.) Our main result in this vein is a characterization of which series over the free group $\fgx{X}$ have rational expressions (Theorem \ref{th: new_connes_criterion} of Section \ref{sec: rank}).
The main ingredient in the proof is a Fredholm operator $\F$ on the free group that we describe in Section \ref{sec: connes}. This operator featured prominently in a conjecture of Alain Connes \cite{Con:1994} that was proven in \cite{DucReu:1997}. Our result extends the main result there. Briefly, \emph{an element $a\in \ser{\fg}$ is rational if and only if its associated Connes operator $\FF{a}$ has finite rank. }

In this paper, we also tackle the problem of rendering a given rational expression into Malcev--Neumann form (Section \ref{sec: implications}). Our main result here, Theorem \ref{th: word problem}, is an effective algorithm for solving the word problem in $\ff{X}$. Recall that the word problem for free groups (\emph{when are two words equal in $\fgx{X}$?}) is decidable, even though this is famously not the case for all groups \cite{Nov:1955,Boo:1958}. The analogous problem for the free field (\emph{when are two expressions equal in $\ff{X}$?}) was first considered in \cite{Coh:1973a} and taken up again in \cite{CR:1999}. 
Here is a simple example: 
\begin{center}
	\emph{Are \ $x\inv\,(1-x)\inv$ and \ $x\inv + (1-x)\inv$ equal?} 
\end{center}
The answer is, ``yes,'' as the reader may easily verify. 
While algorithms are presented in \cite{Coh:1973a,CR:1999}, their complexity seems very high. We derive our algorithm from Fliess' proof \cite{Fli:1971} of the following fact: \emph{simplifications in rational expressions preserve rationality.} Here, at last, we may say that the word problem is certainly not undecidable. Though the complexity of our algorithm could certainly be improved.

\section{The Free Field, Rational Series, and the Connes Operator}\label{sec: prelims}

\subsection{The free field}\label{sec: free field}
The free field $\ff{X}$ is defined as the initial object in the category of \emph{epic $\K\langle X\rangle$ skew fields with specializations.} (See \cite[Ch. 4]{Coh:1995}, which also contains a realization in terms of \emph{full matrices} over $\K\langle X\rangle$.) In what follows, we use Lewin's realization in terms of formal series. 

\subsubsection{Operations on series over the free group}\label{sec: all series}
Let $\fg = \fgx{X}$ denote the free group generated by a finite set of noncommuting indeterminants $X$. Let $\poly\fg$ denote the vector space with basis $\fg$ and $\ser\fg$ denote the \emph{formal series} over $\fg$, i.e., functions $a\colon\fg\rightarrow \K$, which we may write as $\sum_{\w\in \fg} (a,\w)\, \w$ or $\sum_{\w\in \fg}a_\w\, \w$ as convenient. The \emph{support} $\suppx{a}$ of a series $a$ is the set of its nonzero coefficients, $\{\w\in \fg \mid (a,\w)\neq 0\}$. 

The sum $a+b$ of two series over $\fg$ is well-defined and defined by $(a+b,\w) = (a,\w) + (b,\w)$ for all $\w\in \fg$. 

The \demph{Cauchy product} $a\cdot b$ (or $ab$) of two series is well-defined if 
\begin{gather*}\label{eq:series multiplication}
(ab,\w) := \sum_{\substack{\alpha,\beta\in \fg \\ \alpha\beta=\w}} (a,\alpha)(b,\beta)
\end{gather*}
is a finite sum for all $\w\in \fg$. We will also have occasion to use the \demph{Hadamard product} $a \odot b$, defined by $(a \odot b, \w) := (a,\w)(b,\w)$ for all $\w\in\fg$.

We define now a unary operation called the {\em star operation}: given $a\in\ser\fg$, we let $a^\ast$ denote the sum $1+a+a^2 + \dotsb, $ if this is a well-defined element of $\ser\fg$; in other words, if for any $\omega \in \fg$, there are only finitely many tuples $(\omega_1,\ldots,\omega_n)$ such that 
$\omega=\omega_1\cdots\omega_n$ and 
$(a,\omega_1)\cdots(a,\omega_n)\neq 0$. In this case, it is the inverse of $1-a$ in $\ser\fg$ under the Cauchy product. 


\subsubsection{Rational series over the free group}\label{sec: ratseries_group}
Fix a total order $<$ on $\fg$ compatible with its group structure. 
(See Appendix \ref{sec: Lyndon} for an elementary example.)
%
The \demph{Malcev--Neumann series} (with respect to $<$) is the subset $\mn{\fg}\subseteq\ser{\fg}$ of series with well-ordered support. That is, $a\in\mn{\fg}$ if every nonempty set $A\subseteq\suppx{a}$ has a minimum element. 
One shows that $\mn\fg$ is closed under addition, multiplication and taking inverses, i.e., $\mn\fg$ is a skew field. 
See \cite[Ch. 2.4]{Coh:1995}, \cite[Ch. 13.2]{Pass:1985}, or \cite[Ch. IV.4]{Sak:2009}. 
The inverse of a Malcev--Neumann series $a$ is built in a geometric series--type manner: if $a=\oo{a} \oo{\w} + \sum_{\oo{\w}<\w} a_\w \w$, then
\begin{equation}\label{eq:a_inverse}
a\inv = {\Big[\big(1-\sum_{\oo{\w}<\w} \widetilde{a_\w}\, \w{\w_{\mathrm{o}}^{-1}} \big)\oo{a}\oo{\w} \Big]}^{-1}
	=a_{\mathrm{o}}^{-1} \w_{\mathrm{o}}^{-1} {\Big[\sum_{\oo{\w}<\w}\widetilde{a_\w}\, \w{\w_{\mathrm{o}}^{-1}}\Big]}^* ,
\end{equation}
with $\widetilde{a_\w} = -a_{\w}\oo{a}\inv$ and $\w_{\mathrm{o}}^{-1}$ being the minimum element of $\suppx{a\inv}$.

Lewin \cite{Lew:1974} showed that the free field $\ff{X}$ is isomorphic to the \demph{rational closure} of $\K\langle X\rangle$ in $\mn{\fg}$, i.e., the smallest subring of $\mn\fg$ containing $\K\langle X\rangle$ and closed under addition, multiplication and taking inverses. An alternative proof, using Cohn's realization of the free field in terms of full matrices, appears in \cite{Reu:1999}.

\subsection{Rational series over free monoids}\label{sec: ratseries_monoid}

The rational series in $\mn\fg$ introduced above are a particular case of {rational series} over an alphabet $A$. In this more general notion, one starts with polynomials on the free monoid $\poly A^*$ and builds expressions with $(+,\ \times,\ (\arg)^*)$. (Here, for an expression $p\in \poly A^*$ without constant term, $p^* := 1 + p + p^2 + \dotsb.$) 
Indeed, let $\overline{X} = \{\overline{x} \mid x\in X\}$ represent formal inverses for the elements of $X$, and put $A = \bxx$. Then \eqref{eq:a_inverse} allows us to replace the $(\arg)^{-1}$ operation over $\fg$ with the $(\arg)^{\ast}$ operation over $A$. We do so freely in what follows and exploit results from the latter theory that we collect here.

\subsubsection{Hankel rank}\label{sec: hankel rank}
Let $S=\sum_w (S,w)w$ be a series over $A^*$. Given a nonempty word $u\in A^*$, we define the new series 
$u \circc S \in \monser{A}$, a \demph{(right) translate} of $S$, by
\[
  u \circc S := \sum_{w \colon w=\oo{w}u} (S,w)\oo{w}.
\]
Letting $\varepsilon$ denote the empty word, put $\varepsilon \circc S := (S,\varepsilon)$ and extend bi-linearly to view $\circc$ as a map $\circc \colon \poly{A^*} \otimes \monser{A} \to \monser{A}.$ (See \cite[Ch. III.4]{Sak:2009} or \cite[Ch. 1.5]{BerReu:2011}, where the image is denoted $S u\inv$.)
The \demph{Hankel rank} of the series $S$, introduced in \cite{Fli:1974a}, is the rank of the operator $\circc S \colon \poly{A^*} \to \monser{A}$. We extend this construction to series over the free group $\fg$ in Section \ref{sec: reverse_implication}. 

The following is classical. 

\begin{theo}[Fliess]\label{th: Fliess}
A series $S\in\monser{A}$ over a free monoid $A^*$ is rational if and only if its right translates $\left\{f\circc S \mid f\in\poly{A^*} \right\}$ span a finite dimensional subspace of $\monser{A}$.
\end{theo}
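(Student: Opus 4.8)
The plan is to route the statement through the classical equivalence between \emph{rational} and \emph{recognizable} series (the Kleene--Sch\"utzenberger theorem): a series $S\in\monser{A}$ is rational if and only if it admits a \emph{linear representation} of some finite dimension $n$, that is, a monoid morphism $\mu\colon A^*\to M_n(\K)$ together with a row vector $\lambda\in\K^{1\times n}$ and a column vector $\gamma\in\K^{n\times 1}$ such that $(S,w)=\lambda\,\mu(w)\,\gamma$ for every $w\in A^*$. Granting this, it suffices to prove that $S$ has finite Hankel rank if and only if it is recognizable. Throughout I will use the identity $(u\circc S,v)=(S,vu)$, which follows by unwinding the definition of the translate, together with its consequence $u\circc(v\circc S)=(uv)\circc S$.

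For the forward implication, suppose $(S,w)=\lambda\,\mu(w)\,\gamma$. Then for a word $u$ one computes $(u\circc S,v)=(S,vu)=\lambda\,\mu(v)\,\mu(u)\,\gamma$, so that $u\circc S=\Phi(\mu(u)\gamma)$, where $\Phi\colon\K^{n\times 1}\to\monser{A}$ is the fixed linear map $\xi\mapsto\sum_{v}(\lambda\,\mu(v)\,\xi)\,v$. By bilinearity every translate $f\circc S$ (for $f\in\poly{A^*}$) is then $\Phi$ applied to a vector in $\K^{n\times1}$, so the translates all lie in the image $\Phi(\K^{n\times1})$, a subspace of dimension at most $n$. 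Hence the Hankel rank is finite.

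The substance of the theorem is the converse, which I would prove by manufacturing a linear representation out of the translate space itself. Let $W=\operatorname{span}\{f\circc S\mid f\in\poly{A^*}\}$, assumed finite-dimensional, and note $S\in W$ (adjoining it at the cost of one dimension if the chosen convention for $\varepsilon\circc S$ excludes it). The key stability observation is that for each letter $a\in A$ the operator $\rho_a\colon T\mapsto a\circc T$ preserves $W$: indeed $\rho_a(u\circc S)=a\circc(u\circc S)=(au)\circc S\in W$. Assembling these, the assignment $\mu(a):=\rho_a$ extends to a monoid morphism $\mu\colon A^*\to\operatorname{End}(W)$ satisfying $\mu(w)(T)=w\circc T$, precisely because $u\circc(v\circc S)=(uv)\circc S$. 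Taking $\lambda$ to be the coordinate vector of $S\in W$ and $\gamma$ the evaluation functional $T\mapsto(T,\varepsilon)$, I then verify $\gamma\big(\mu(w)(S)\big)=(w\circc S,\varepsilon)=(S,w)$, exhibiting $S$ as recognizable of dimension $\dim W$; by Kleene--Sch\"utzenberger it is therefore rational. The main obstacle is exactly this backward direction: one must confirm that $W$ is genuinely invariant under the single-letter translations (so that $\mu$ lands in $\operatorname{End}(W)$ rather than in a larger space) and that the composition law for $\mu$ is compatible with the translate identity, after which reading off the representation $(\lambda,\mu,\gamma)$ is routine.
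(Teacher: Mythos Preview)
The paper does not supply its own proof of this theorem; it is quoted as a classical result of Fliess (``The following is classical'') with pointers to the literature for related facts. There is therefore nothing in the paper to compare your argument against.

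That said, your argument is correct and is precisely the standard proof: pass through the Kleene--Sch\"utzenberger equivalence of \emph{rational} and \emph{recognizable}, then show recognizable $\Leftrightarrow$ finite Hankel rank by (i) observing that a representation $(\lambda,\mu,\gamma)$ forces every translate $u\circc S$ into the image of the fixed linear map $\K^{n\times1}\to\monser{A}$, $\xi\mapsto\sum_v(\lambda\,\mu(v)\,\xi)\,v$, and (ii) using the invariance of the translate space $W$ under the single-letter operators $\rho_a$ to manufacture a representation on $W$ itself. Your caveat about the paper's convention $\varepsilon\circc S=(S,\varepsilon)$ (which makes $\varepsilon\circc S$ a scalar rather than $S$, so that $S$ may need to be adjoined to $W$ at the cost of one dimension) is exactly right, and the verification that $\rho_a$ still preserves the enlarged space is immediate since $a\circc S\in W$. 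The only cosmetic wrinkle is that your $\lambda$ and $\gamma$ are swapped relative to the paper's $(\lambda,\mu,\rho)$ convention, but this is harmless up to transposition.
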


An easy corollary is that the Hadamard product preserves rationality.
For a proof, see \cite[Th. 1.5.5]{BerReu:2011}. 

\begin{theo}[Sch\"utzenberger]\label{th: Hadamard}
If $S,T$ are rational series, then $S\odot T$ is rational.
\end{theo}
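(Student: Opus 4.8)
The plan is to run Fliess' criterion (Theorem~\ref{th: Fliess}) in both directions: use it to turn the hypotheses on $S$ and $T$ into finite spanning sets of translates, and use it again to certify $S\odot T$ by bounding \emph{its} translates. The bridge between the two is a compatibility between the translate operation $\circc$ and the Hadamard product $\odot$. First I would record the elementary identity, valid for every word $u\in A^*$,
\[
u \circc (S\odot T) = (u\circc S)\odot(u\circc T).
\]
This is a coefficient check: reading off the coefficient of an arbitrary word $v$ on both sides, each equals $(S,vu)(T,vu)$, since by definition $(u\circc S, v)=(S,vu)$ and likewise $(u\circc T, v)=(T,vu)$, while $(u\circc(S\odot T),v)=(S\odot T,vu)=(S,vu)(T,vu)$. (The case $u=\varepsilon$ degenerates to the statement $(S,\varepsilon)(T,\varepsilon)$ about constant series.)

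Next, since $\circc$ is bilinear in its first argument, the subspace of $\monser{A}$ spanned by the polynomial translates $\{f\circc(S\odot T)\mid f\in\poly{A^*}\}$ coincides with the span of the word translates $\{u\circc(S\odot T)\mid u\in A^*\}$, so by Theorem~\ref{th: Fliess} it suffices to show the latter span is finite dimensional. Applying Theorem~\ref{th: Fliess} to the hypotheses, the word translates of $S$ span a finite-dimensional space: there are finitely many words $u_1,\dots,u_m$ with $u\circc S\in\operatorname{span}\{u_i\circc S\}_i$ for every $u$; choose $v_1,\dots,v_n$ analogously for $T$. Then for any word $u$ I write $u\circc S=\sum_i\alpha_i(u_i\circc S)$ and $u\circc T=\sum_j\beta_j(v_j\circc T)$, and substituting into the identity above, using bilinearity of $\odot$, yields
\[
u\circc(S\odot T)=\sum_{i,j}\alpha_i\beta_j\,(u_i\circc S)\odot(v_j\circc T).
\]
Hence every translate of $S\odot T$ lies in the span of the finite family $\{(u_i\circc S)\odot(v_j\circc T)\}_{i,j}$, which is finite dimensional; Theorem~\ref{th: Fliess} then gives that $S\odot T$ is rational.

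I do not expect a genuine obstacle here: the entire argument reduces to getting the translate--Hadamard identity exactly right, which is the crux, after which everything is linear bookkeeping fed by Fliess' finite spanning sets. The one place to be careful is the empty-word/constant-term case, together with the convention that the scalar $\varepsilon\circc S=(S,\varepsilon)$ is regarded as a constant series, so that all the Hadamard products above are legitimately formed in $\monser{A}$.
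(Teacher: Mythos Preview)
Your argument is correct and is precisely the ``easy corollary'' of Fliess' criterion that the paper alludes to; the paper itself does not spell out a proof but simply cites \cite[Th.~1.5.5]{BerReu:2011}, where the same translate-compatibility idea $u\circc(S\odot T)=(u\circc S)\odot(u\circc T)$ underlies the argument. Your handling of the $\varepsilon$ case and the bilinearity bookkeeping is fine.
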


\subsubsection{Recognizable series}\label{sec: recog}
We recall some additional results on recognizable series that will be useful in Section \ref{sec: implications}. Fix a series $S=\sum_w (S,w)w \in \monser{A}$.

Let $L = \suppx{S}$. The \demph{language} $L$ is said to be \demph{recognizable} if there is an automaton\footnote{A finite state machine that processes words and transitions between states by reading letters one at a time \cite{Sak:2009}.} that accepts $L$ and no other words. For example, the machine in Figure \ref{fig: recog} accepts $\supp (b+ac)^*$; the input state is marked with an arrow and the output state is doubly circled.
\begin{figure}[!hbt]
  \centering
    \unitlength=3.25pt
    \begin{picture}(25, 13)(0,0)
    \gasset{Nw=5,Nh=5,Nmr=2.5,curvedepth=3,loopdiam=6}
    \thinlines
    \node[Nmarks=i,iangle=90](A1)(0,7){{\small\sf0}}
    \rmark[rdist=.4](A1)
    \drawloop[loopangle=180](A1){$b$}
    \node(A2)(25,7){{\small\sf1}}
    \drawedge(A1,A2){$a$}
    \drawedge(A2,A1){$c$}
    \end{picture}
\caption{An automaton recognizing the language $\supp (b+ac)^*$.}
\label{fig: recog}
\end{figure}
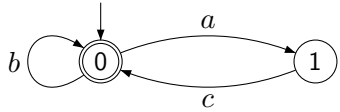
More generally, we may extend the notion of automata so that edge-labels carry coefficients other than 0 or 1, e.g., taking values in some semiring $\K$, and speak about $S$ being recognizable by a $\K$-automaton.

\begin{theo}[Sch\"utzenberger] A series $S$ is rational iff it is recognizable.
\end{theo}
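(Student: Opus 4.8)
The plan is to read the notion of a $\K$-automaton on $n$ states in linear-algebraic terms: it is a \emph{linear representation} $(\lambda,\mu,\gamma)$, consisting of a row vector $\lambda\in\K^{1\times n}$ (the initial weights), a monoid homomorphism $\mu\colon A^*\to\K^{n\times n}$ (the transition weights, so that $\mu(a_1\cdots a_k)=\mu(a_1)\cdots\mu(a_k)$ and $\mu(\varepsilon)=I$), and a column vector $\gamma\in\K^{n\times 1}$ (the final weights), such that $(S,w)=\lambda\,\mu(w)\,\gamma$ for every $w\in A^*$. Thus \emph{recognizable} means that $S$ admits such a representation for some finite $n$. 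I would then establish the two implications separately.

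For \emph{recognizable $\Rightarrow$ rational}, I would exploit Theorem~\ref{th: Fliess}. A direct computation shows that for any $u\in A^*$ the right translate $u\circc S$ is again recognizable, with representation $(\lambda,\mu,\mu(u)\gamma)$; indeed $(u\circc S,w)=(S,wu)=\lambda\,\mu(w)\,\mu(u)\,\gamma$. Consequently every translate lies in the image of the fixed linear map $\K^{n}\to\monser{A}$, $v\mapsto\sum_{w}(\lambda\,\mu(w)\,v)\,w$, whose image has dimension at most $n$. Hence $\{f\circc S\mid f\in\poly{A^*}\}$ spans a finite-dimensional subspace of $\monser{A}$, and Fliess' theorem gives that $S$ is rational. (Alternatively, and without invoking Fliess, one has the closed form $S=\lambda\,M^{*}\,\gamma$ with $M=\sum_{a\in A}\mu(a)\,a$; here $M$ is a proper matrix over $\poly{A^*}$, and the result reduces to the \emph{matrix-star lemma}, that the star of a proper square matrix over rational series again has rational entries, proved by induction on the size using the block identity for $\left(\begin{smallmatrix}P&Q\\R&T\end{smallmatrix}\right)^{*}$.)

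For \emph{rational $\Rightarrow$ recognizable}, I would argue by structural induction on the rational expression, showing that the recognizable series form a class containing the polynomials and closed under the three rational operations. Polynomials are recognizable by an explicit finite representation. Closure under sum is the direct sum of representations; closure under the Cauchy product is a block-triangular construction of dimension $n_1+n_2$. The delicate case is the star, which is applied only to a \emph{proper} series $S$ (one with $(S,\varepsilon)=0$, equivalently $\lambda\gamma=0$): here I would write $S^{*}=1+S^{+}$ and represent $S^{+}=\sum_{k\ge 1}S^{k}$ by the feedback representation $(\lambda,\,(I+\gamma\lambda)\mu(\arg),\,\gamma)$, the properness being exactly what guarantees that this series is well defined and that its coefficients agree with $\sum_{k\ge1}S^k$.

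The step I expect to be the main obstacle is the treatment of the star. In the Fliess-free route it is the matrix-star lemma, whose proof must carefully track which blocks are proper so that all the intermediate stars exist; in the closure route it is the verification that the feedback construction for $S^{+}$ is correct, where properness is what prevents spurious $\varepsilon$-weighted loops that would otherwise render $S^{*}$ ill defined. Everything else is routine linear algebra and bookkeeping.
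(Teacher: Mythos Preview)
The paper does not supply its own proof of this theorem; it is quoted as a classical result of Sch\"utzenberger, with the surrounding discussion pointing to \cite{BerReu:2011} and \cite{Sak:2009} for background. So there is nothing to compare against, and your proposal is a correct and standard argument.

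One remark on your first route for \emph{recognizable $\Rightarrow$ rational}: invoking Theorem~\ref{th: Fliess} here is circular in most treatments, since the usual proof that finite Hankel rank implies rationality (including the one in \cite{BerReu:2011}, which the paper cites for Theorem~\ref{th: Fliess}) proceeds by first building a linear representation and then converting that to a rational expression---i.e., it already contains the implication you are trying to establish. Your parenthetical alternative via $S=\lambda M^{*}\gamma$ and the matrix-star lemma is the self-contained argument, and that is the one to present. The structural-induction direction and the feedback representation $(\lambda,(I+\gamma\lambda)\mu,\gamma)$ for $S^{+}$ are correct as written.
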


We say that $S$ is \demph{representable}, with order $n$, if there exist row and column vectors $\lambda\in \K^{1\times n}$, $\rho \in \K^{n\times 1}$, and a monoid morphism $\mu\colon A^* \to \K^{n\times n}$ satisfying $(S,w) = \lambda \mu(w) \rho$ for all $w\in A^*$. For example, a representation $(\lambda,\mu,\rho)$ of the series $(b+ac)^*$ is provided by
\[
	\lambda = \begin{pmatrix} 1 & 0 \end{pmatrix} \!,	
	\quad \ \ 
	\mu(a) = \begin{pmatrix} 0 & 1\\ 0 & 0\end{pmatrix} \!,
	\ \ 
	\mu(b) = \begin{pmatrix} 1 & 0\\ 0 & 0\end{pmatrix} \!,
	\ \ 
	\mu(c) = \begin{pmatrix} 0 & 0\\ 1 & 0\end{pmatrix} \!,
	\quad \ \ 
	\rho = \begin{pmatrix}1\\ 0\end{pmatrix} \!.
\]
Here, $\mu(x)_{ij}=1$ iff there is an edge $i\xrightarrow{x}j$ in the automaton.
The equivalence between recognizable and representable series is well-known. (In fact, the usage of ``recognizable'' in the literature for what we call here ``representable'' is commonplace.) In \cite{Fli:1974a}, Fliess shows that the least $n$ for which $S$ has a representation is the Hankel rank of $S$ \cite[Th. 2.1.6]{BerReu:2011}.

\subsection{The Connes operator}\label{sec: connes}

In his book \emph{Noncommutative Geometry,} Connes \cite{Con:1994} gives a new proof of a celebrated $C^{\ast}$-algebra result%
\footnote{Namely, \emph{the reduced $C^{\ast}$-algebra of the free group does not contain nontrivial idempotents.} 
} 
of Pimsner--Voiculescu \cite{PimVoi:1982} using the machinery of ``Fredholm modules.'' We  recount the details that are relevant to the present work.

\subsubsection{Reduced words}\label{sec: reduced words}
Recall that each element $\w\in\fg$ has a unique \emph{reduced expression} $\w=x_1\dotsb x_n$ with $x_i\in X$ or $x_i\inv\in X$ for all $i$ and no pair $(i,i{+}1)$ satisfying $x_ix_{i+1} = 1_\fg$. We say that $\w$ has \demph{length} $\ell(\w)=n$. 

We identify ${\fg}$ with the subset of \demph{reduced words}, $\red\XX$, within the free monoid: say  $\w\in\XX$ is reduced if it contains no factor of the form $\sbar{x}x$ or $x\sbar{x}$ for any $x\in X$. We often denote the inverse of an element $\w\in\fg$ by $\overline{\w}$, and put $\overline{\overline{\w}}=\w$, to reduce clutter in the expressions that follow. 
Finally, if $\w_1, \dotsc, \w_r$ are elements of $\vv$, we write $\w \doteq \w_1\dotsb\w_n$ if the product is a \demph{reduced factorization} of $\w$ (in particular, each $\w_i$ is reduced). That is, $\w=\w_1\dotsb \w_r$ in $\fg$ and $\ell(\w) = \ell(\w_1) + \dotsb + \ell(\w_r)$.

Let $\w = x_1\dotsb x_n$ ($x_i\in X$ or $x_i^{-1} \in X$) be the reduced expression for $\w\in\fg\setminus\{1\}$. Its \demph{longest proper prefix} $\pp{\w}$ is the word $x_1\dotsb x_{n-1}$. Its \demph{(nonempty) suffixes} comprise the set $\suff{\w} = \{x_n,\,x_{n-1}x_n,\,\dotsc,\,x_1\dotsb x_{n-1}x_n\}$. 
We take the codomains of $\pp{\arg}$ and $\suff{\arg}$ to be $\fg$ or $\red\XX,$ as convenient, with context dictating which is intended.

\subsubsection{The Connes operator}\label{sec: operator}

Let $\cc = (\mathcal{V},\ee)$ be the Cayley graph of $\fg$, the infinite tree with vertex set $\mathcal{V}=\vv$ and edge set $\ee$ satisfying $\{\uu,\w\} \in \ee$ if and only if $\uu = \pp \w$ or $\w=\pp \uu$. (In this case, note that $\{\alpha\uu,\alpha\w\}$ is again an element of $\ee$, for any $\alpha\in\vv$.)
On the linear space $\poly\cc=\poly\vv\oplus\poly\ee$, define a bilinear operation $\cdot\colon \ser\fg \otimes \poly\cc \to \ser\cc$ as follows. For $a=\sum_{\alpha\in\fg} (a,\alpha) \alpha %
 \in \ser\fg$ and $\{\uu,\w\}\in\ee$, put
\begin{gather*}
  a\cdot \w := \sum_{\alpha\in\fg} (a,\alpha)\,\alpha\w 
	\quad\hbox{ and }\quad
  a\cdot\{\uu,\w\} := \sum_{\alpha\in\fg} (a,\alpha)\, \{\alpha\uu, \alpha\w\} .
\end{gather*}

Recall that a \demph{Fredholm operator} between two normed vector spaces $X$ and $Y$ is a bounded linear map with finite dimensional kernel and cokernel. The Fredholm operator $\F \colon \ser\cc \to \ser\cc$ that Pimsner--Voiculescu and Connes use is defined as follows. For all $\uu \in \mathcal{V}$ and $\{\pp\w,\w\} \in \ee$,
\begin{gather*}
  \F(\{\pp \w, \w\}) := \w 
	\qquad\hbox{ and }\qquad
  \F(\uu) := \left\{\begin{array}{@{\,}ll}
	0 & \hbox{if }\uu=1 \\
	\{\pp \uu, \uu\} & \hbox{otherwise.} 
	\end{array}\right. 
\end{gather*}
The reader may check that $\ker \F = \mathrm{co}{\ker \F} = \mathrm{span}_{\K} \{1\}$.

\begin{defi}
Given $a\in\ser\fg$, the \demph{Connes operator} $\FF{a} \colon \poly\cc \rightarrow \ser{\cc}$ is defined as the commutator $\F\circ a - a\circ\F $.
\end{defi}

We develop a formula for the action of $\FF{a}$ on an edge $\{\pp\w, \w\}$:
\begin{align}
\notag \FF{a}\,\big\{\pp{\w}, \w\big\} &= \F \circ a\circ \big\{\pp{\w}, \w\big\} - a\circ \F\circ \big\{\pp{\w}, \w\big\} \\
\notag &= \F\biggl( \sum_{\alpha\in\vv} a_\alpha\, \big\{\alpha\pp{\w}, \alpha\w\big\}\biggr) - a\cdot \w \\
\notag &= \Biggl( \sum_{\substack{\alpha\in\vv\rule[-2pt]{0pt}{4pt} \\ \overline{\w}\in\suff{\alpha}}} \!\! a_\alpha\, \alpha\pp{\w} + \sum_{\substack{\alpha\in\vv\rule[-2pt]{0pt}{4pt} \\ \overline{\w}\not\in\suff{\alpha}}} \!\! a_\alpha\, \alpha\w \Biggr) - 
	\Biggl(\sum_{\substack{\alpha\in\vv\rule[-2pt]{0pt}{4pt} \\ \overline{\w}\in\suff{\alpha}}} \!\! a_\alpha \alpha\w + 
	\sum_{\substack{\alpha\in\vv\rule[-2pt]{0pt}{4pt} \\ \overline{\w}\notin\suff{\alpha}}} \!\! a_\alpha \alpha\w \Biggr) \\
\label{eq: Fa formula} 
&= \sum_{\substack{\alpha\in\vv \rule[-2pt]{0pt}{4pt} \\ \overline{\w}\in\suff{\alpha}}} \!\! a_\alpha\, \alpha \bigl(\pp\w - \w \bigr) .
\end{align}

\begin{exam} $\FF{xxy}\{\bar y,\bar{y}\bar{x}\} = xxy(\bar{y}-\bar y\bar x) = xx-x$, while $\FF{xyy}\{\bar y, \bar y\bar x\} = 0$.
\end{exam}

After his proof of the Pimsner--Voiculescu result, Connes offers a conjecture about the rank of his operator \cite[p. 342, Remark 3]{Con:1994}. This was later proved by Duchamp and the second author \cite{DucReu:1997}. A discrete topology analog of the conjecture goes as follows.

\begin{theo}\label{th: connes_criterion} 
An element $a$ of the Malcev--Neumann series $\mn\fg$ belongs to the rational closure of $\poly\fg$ in $\mn\fg$ $\iff$ the operator $\FF{a}$ is of finite rank. That is, if the image $\FF{a}\poly\cc \subseteq \ser\cc$ is finite dimensional. 
\end{theo}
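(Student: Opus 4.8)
The plan is to rewrite the action of $\FF{a}$ on $\poly\cc$ in terms of the right translates $\circc$ and then appeal to Fliess' criterion, Theorem~\ref{th: Fliess}. Everything flows from \eqref{eq: Fa formula}. Fix a nonidentity $\w$, set $s=\overline{\w}$, and let $c$ be the first letter of $s$. Every $\alpha$ with $\overline{\w}\in\suff{\alpha}$ admits a reduced factorization $\alpha\doteq\beta\,s$, and then $\alpha\w=\beta$ while $\alpha\pp{\w}=\beta c$; substituting into \eqref{eq: Fa formula} collapses the sum to
\[
  \FF{a}\big\{\pp{\w},\w\big\} = (s\circc a)\,(c-1).
\]
A similar but longer cancellation analysis of $a\cdot\uu$ shows that the only surviving contributions to $\FF{a}(\uu)$ come from the $\alpha$ in which $\overline{\uu}$ cancels \emph{entirely}; writing $c$ for the first letter of $\overline{\uu}$ this yields, for $\uu\neq 1$,
\[
  \FF{a}(\uu) = \F(\overline{\uu}\circc a) - (\overline{\uu}\circc a)\cdot\{1,c\},
\]
with $\FF{a}(1)=\F(a)$. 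Thus the image of $\FF{a}$ is built from the translates $\{s\circc a\}$ by the three fixed linear operations $T\mapsto T(c-1)$, $T\mapsto T\cdot\{1,c\}$, and $T\mapsto\F(T)$, indexed by the finitely many letters $c\in\bxx$.

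For the implication $\Rightarrow$, suppose $a$ lies in the rational closure. By Theorem~\ref{th: Fliess} the translate space $\mathcal H=\mathrm{span}\{f\circc a : f\in\poly{\XX}\}$ is finite dimensional. For each of the finitely many $c\in\bxx$, the spaces $\mathcal H\,(c-1)$, $\mathcal H\cdot\{1,c\}$, and $\F(\mathcal H)$ are finite dimensional (the last because $\F$ is linear), and the two displays above place the whole image $\FF{a}\poly\cc$ inside their sum. Hence $\FF{a}$ has finite rank. (One can instead argue algebraically, from the commutator identities $\FF{a+b}=\FF{a}+\FF{b}$, $\FF{ab}=\FF{a}\,b+a\,\FF{b}$, and $\FF{a\inv}=-a\inv\FF{a}\,a\inv$, the finiteness of $\FF{x}$ for generators, and induction on a rational expression for $a$.)

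For the implication $\Leftarrow$, assume $\FF{a}$ has finite rank; then in particular the edge-images span a finite dimensional space $E$. The key point is that right multiplication by $c-1$ is \emph{injective} on $\mn\fg$: if $T(c-1)=0$ then $T_{\gamma}=T_{\gamma c^{\,n}}$ for every $n\in\Z$, so a single nonzero coefficient forces the bi-infinite chain $\{\gamma c^{\,n} : n\in\Z\}$ into $\suppx{T}$, a set with no minimum---impossible for a well-ordered support. Each translate $s\circc a$ is itself Malcev--Neumann, since $v\mapsto vs$ is an order isomorphism of $\suppx{s\circc a}$ onto a subset of the well-ordered set $\suppx{a}$. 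Grouping the translates by their leading letter $c$ and applying injectivity to the first display gives $\dim\,\mathrm{span}\{s\circc a : s\ \text{begins with}\ c\}\le\dim E$; summing over the finitely many $c$ and adjoining the single translate $\varepsilon\circc a$ shows $\mathcal H$ is finite dimensional. Theorem~\ref{th: Fliess} then makes $a$ a rational series over $\XX$, and \eqref{eq:a_inverse} together with Lewin's realization returns $a$ to the rational closure of $\poly\fg$ in $\mn\fg$.

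The hard part is the direction $\Leftarrow$, and within it the injectivity of $T\mapsto T(c-1)$ on $\mn\fg$: this is precisely where the Malcev--Neumann hypothesis is indispensable, and it is what lets one undo the right multiplication and recover the translates from the edge-images alone. By contrast, the supporting facts---that translates of a well-ordered series stay well-ordered, and the cancellation bookkeeping behind the vertex formula---are routine, though they must be checked with care.
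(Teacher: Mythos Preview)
Your reverse implication is correct and takes a somewhat cleaner route than the paper does in the Malcev--Neumann setting. The paper (Proposition~\ref{th:rank bounds}) recovers $\overline\w\circc a$ from $\FF{a}\{\pp\w,\w\}$ by right-multiplying by $-\overline{x}^{\,*}$ and then checking, via a combinatorial prefix argument, that this product is well-defined in $\ser\fg$. You instead observe that right multiplication by $c-1$ is injective on $\mn\fg$; in fact, since $\mn\fg$ is a skew field and $c-1\neq 0$, your well-orderedness argument is not even strictly needed. This is shorter, but it is genuinely tied to the Malcev--Neumann hypothesis: the paper's explicit one-sided inverse works in the larger space $\ser\fg$, which is what lets the same machinery prove the stronger Theorem~\ref{th: new_connes_criterion}. (Both you and the paper are equally terse about the final passage from ``rational over $\XX$'' back to ``rational closure in $\mn\fg$''; this is where the without-simplification material of Section~\ref{sec: implications} enters, and your appeal to \eqref{eq:a_inverse} and Lewin alone does not quite cover it.)

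Your forward implication, however, has a gap in its primary argument. You invoke Theorem~\ref{th: Fliess} to conclude that the translate space $\mathcal H$ is finite-dimensional, but Fliess' criterion concerns rational series over the free \emph{monoid} $\XX$, whereas your hypothesis places $a$ in the rational closure of $\poly\fg$ in $\mn\fg$, which is built using the \emph{group} product. These differ (in the monoid $x\cdot\overline x$ is a length-two word, in $\fg$ it is $1$), and passing from group-rationality to monoid-rationality is exactly the nontrivial content of Benois' theorem \cite{Ben:1969} and the reduction-preserving results behind Section~\ref{sec: implications}; you cannot simply quote Theorem~\ref{th: Fliess}. Your parenthetical alternative---inducting on a rational expression via $\FF{ab}=\FF{a}b+a\FF{b}$ and $\FF{a\inv}=-a\inv\FF{a}a\inv$---is precisely the paper's approach (Proposition~\ref{th: connes identities} together with Section~\ref{sec: forward_implication}) and avoids this difficulty entirely. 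Note, though, that this route also requires Lemma~\ref{th: closed}: to conclude that $\img\FF{a}b$ is finite-dimensional one approximates $b$ by polynomials and uses that $\img\FF{a}$ is closed, a point your parenthetical omits.
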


This appears as Th{\'e}or{\`e}me 12 in \cite{DucReu:1997}, but the proof of the forward implication is not given. A complete proof is given here, as it follows from the proof of our first result below.

\section{Rank of the Connes Operators}\label{sec: rank}

By the {\em rational closure} of $\poly\fg$ in $\ser\fg$, we mean the smallest subspace of $\ser\fg$ that contains $\poly\fg$ and is closed under the Cauchy product and the star operation. Here, we insist only that all products and stars involved are well-defined in the sense of Section \ref{sec: all series}. Our first result is the following generalization of Theorem \ref{th: connes_criterion}. 

\begin{theo}\label{th: new_connes_criterion} 
A formal series $a\in\ser\fg$ belongs to the rational closure of $\poly\fg$ in $\ser\fg$ $\iff$ the operator $\FF{a}$ is of finite rank. 
\end{theo}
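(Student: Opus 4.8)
The plan is to prove both implications by comparing the rank of $\FF a$ with the dimension of the span of the (right) translates of $a$; the Fliess criterion (Theorem~\ref{th: Fliess}) then converts the translate condition into rationality. Identify $\fg$ with the reduced words $\red\XX$ in the free monoid $A^\ast$, $A=\bxx$, so that a series $a\in\ser\fg$ is literally a series over $A^\ast$ supported on reduced words; since $\w\mapsto\w$ is a bijection onto $\red\XX$, no infinite fibres arise and the extension from $\mn\fg$ (Theorem~\ref{th: connes_criterion}) to all of $\ser\fg$ costs nothing. For a word $u$ write $u\circc a$ for the translate of Section~\ref{sec: hankel rank}; note that $u\circc a=0$ whenever $u$ is \emph{not} reduced, so the span of all translates of $a$ over $A^\ast$ equals the span of $\{\,\overline\w\circc a:\w\in\fg\,\}$. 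Through \eqref{eq:a_inverse}, a rational series over $A^\ast$ supported on reduced words is exactly an element of the rational closure of $\poly\fg$ in $\ser\fg$. It therefore suffices to prove that $\FF a$ has finite rank if and only if these translates span a finite-dimensional subspace.

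For the forward implication I would show that the class $\mathcal R=\{\,a\in\ser\fg:\FF a\ \text{has finite rank}\,\}$ contains $\poly\fg$ and is closed under the three rational operations, hence contains the whole rational closure. A one-line computation from the definition of $\F$ shows that $\FF x$ has rank at most $2$ for each generator $x$, whence $\FF a$ has finite rank for every $a\in\poly\fg$ by linearity. Closure under $+$ is immediate from $\FF{a+b}=\FF a+\FF b$. For products one uses the Leibniz rule for the commutator $\FF{\arg}=\F\circ(\arg)-(\arg)\circ\F$,
\[
  \FF{ab}=\FF a\circ(b\,\cdot)+(a\,\cdot)\circ\FF b,
\]
together with the fact that left multiplication by a fixed series carries a finite-dimensional subspace to a finite-dimensional one; both summands then have finite-rank image. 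Differentiating $a^\ast(1-a)=1$ by the same rule gives $\FF{a^\ast}=(a^\ast\cdot)\circ\FF a\circ(a^\ast\cdot)$, again of finite rank whenever $\FF a$ is. This establishes the forward implication and, in particular, supplies the half of Theorem~\ref{th: connes_criterion} whose proof was omitted previously.

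For the reverse implication I would read off from \eqref{eq: Fa formula} the action of $\FF a$ on an edge: writing $v$ for the first letter of $\overline\w$ and $S_{\mid v}$ for the restriction of a series $S$ to words ending in $v$, one has
\[
  \FF a\,\{\pp\w,\w\}=(\overline\w\circc a)(v-1),
  \qquad
  (\overline\w\circc a)\,v=(\overline{\pp\w}\circc a)_{\mid v}.
\]
Setting $W:=\operatorname{im}\FF a$ (finite-dimensional by hypothesis) and combining these, for every reduced word $s$ with $vs$ reduced one gets
\[
  v\circc(s\circc a)\equiv(s\circc a)_{\mid v}\pmod W .
\]
Since every suffix of a reduced word is reduced, this congruence applies at each step of the iterated translation $\overline\w\circc a=v_1\circc(\,\cdots(v_k\circc a)\,)$ along $\overline\w=v_1\cdots v_k$. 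Modulo $W$ each single-letter translate thus becomes a restriction-to-last-letter; as these restrictions are commuting idempotents with pairwise orthogonal ranges, their iterated composites collapse. Enlarging $W$ to its finite-dimensional, restriction-closed span $\widehat W:=W+\sum_{v\in A}W_{\mid v}$, one obtains $\overline\w\circc a\in\widehat W+\operatorname{span}\{\,a,\ a_{\mid v}:v\in A\,\}$ for every $\w$, a fixed finite-dimensional space. Theorem~\ref{th: Fliess} then shows $a$ is rational over $A^\ast$, and the dictionary above places $a$ in the rational closure.

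The main obstacle is the reverse implication, specifically the passage from the finite-dimensionality of the \emph{differences} $(\overline\w\circc a)(v-1)$ to that of the translates themselves: one cannot simply ``cancel'' the factor $(v-1)$, and the clean resolution is the observation that, modulo the finite-dimensional image of $\FF a$, translation degenerates into last-letter restriction and hence collapses. Carrying this out rigorously requires careful reduced-word bookkeeping in the two edge identities above (in particular that $(v\circc S)\,v=S_{\mid v}$ over reduced words, and that the side condition $vs$ reduced is automatically met along suffixes), together with the constant-term corrections arising from the empty word.
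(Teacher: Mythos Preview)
Your forward implication has a genuine gap in the product (and hence the star) case. You write the Leibniz rule correctly as $\FF{ab}=\FF a\circ(b\cdot)+(a\cdot)\circ\FF b$, but then claim that ``both summands have finite-rank image'' because ``left multiplication by a fixed series carries a finite-dimensional subspace to a finite-dimensional one.'' That reasoning applies only to the second summand $(a\cdot)\circ\FF b$. For the first, the trouble is that $b\cdot e$ for an edge $e$ is in general an \emph{infinite} linear combination of edges, hence lies in $\ser\ee$ rather than in the domain $\poly\ee$ on which $\FF a$ is defined. So the inclusion $\img\bigl(\FF a\circ(b\cdot)\bigr)\subseteq\img\FF a$ is not automatic: one must first make sense of the composite (this is exactly Lemma~\ref{th: keylemma}, showing that $a\F b$ is well-defined so that the Leibniz rule is a genuine identity of operators $\poly\ee\to\ser\fg$), and then argue that the extended image stays inside the original finite-dimensional $\img\FF a$. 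The paper does the latter topologically (Lemma~\ref{th: closed}): approximating $b$ by polynomials $b_n$, one has $\FF a(b_n e)\to\FF a(be)$, and since any finite-dimensional subspace of $\ser\fg$ is closed in the product topology, the limit remains in $\img\FF a$. Without something of this sort your forward direction is incomplete; the same issue recurs in the star case for the right-hand factor of $(a^\ast\cdot)\circ\FF a\circ(a^\ast\cdot)$.

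Your reverse implication, by contrast, takes a genuinely different route from the paper's and appears to work. The paper cancels the factor $(\overline x-1)$ in $\FF a\{\pp\w,\w\}=(\overline\w\circc a)(\overline x-1)$ by right-multiplying by the geometric series $-\overline x^{\,\ast}$, then verifies that the resulting operator is well-defined; this yields the explicit two-sided comparison $\frac{1}{2n}\rnk{\circc a}\le\rnk\FF a\le 2n\,\rnk{\circc a}$ of Proposition~\ref{th:rank bounds}. You instead observe that, modulo the finite-dimensional image $W=\img\FF a$, single-letter translation coincides with last-letter restriction, and that these restrictions are orthogonal idempotents whose iterates collapse; every translate then lands in the fixed finite-dimensional space $\widehat W+\operatorname{span}\{a_{|v}:v\in A\}$. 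This sidesteps the convergence check for $\overline x^{\,\ast}$ and is arguably cleaner for the qualitative statement, though it does not deliver the quantitative rank inequalities the paper obtains along the way.
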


Sections \ref{sec: F-identities} and \ref{sec: closed} establish the necessary machinery for the proof. Sections \ref{sec: forward_implication} and \ref{sec: reverse_implication} establish the forward and reverse implications, respectively. 
(See Appendix \ref{sec: Extensions} for possible refinements and extensions.)

As a first simplification, note that $\FF{\arg} \colon \poly\cc \to \ser\cc$ has finite rank if and only if the restricted operator  $\FF{\arg}\big\vert_{\poly\ee} \colon \poly\ee \to \ser\fg$ does. We focus on $\poly\ee$ in what follows.

\subsection{Elementary identities on Connes operators}\label{sec: F-identities}

Given $a,b\in\ser\fg$, it will be useful to express the operators $\FF{ab}$ and $\FF{a^{\ast}}$ in terms of $\FF{a}$ and $\FF{b}$. 

\begin{prop}\label{th: connes identities} 
Suppose $a,b\in\ser\vv$ are such that $a b$ and $a^{\ast}$ are well-defined. Then $\FF{ab}$ and $\FF{a^{\ast}}$ are well-defined operators; moreover
\begin{eqnarray}
\label{eq: Fab} \FF{ab} &=& \FF{a}b + a\FF{b} ,\\[1ex]
\label{eq: Fa*} \FF{a^{\ast}} &=& a^{\ast}\FF{a}a^{\ast} .
\end{eqnarray}
\end{prop}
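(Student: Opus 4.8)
The plan is to recognize the Connes operator $\FF{a}$ as literally the commutator $[\F,a]$ of $\F$ with the left-multiplication operator $v\mapsto a\cdot v$, and then to read both formulas off the Leibniz rule for commutators, once we know that $a\mapsto(a\,\cdot)$ is a homomorphism from $\ser\fg$ (where defined) into operators on $\poly\cc$. Concretely, I would first verify the \emph{module property}
\[
(ab)\cdot v = a\cdot(b\cdot v),\qquad (a+b)\cdot v = a\cdot v + b\cdot v,\qquad 1\cdot v = v ,
\]
for $v\in\poly\cc$. The additive and unital statements are immediate from the definition of the action; the multiplicative one is a direct computation for $v$ a vertex $\w$ and for $v$ an edge $\{\uu,\w\}$, using the definition of the Cauchy product together with associativity in $\fg$: for instance $(ab)\cdot\w=\sum_\gamma(ab)_\gamma\,\gamma\w=\sum_{\alpha,\beta}a_\alpha b_\beta\,\alpha\beta\w=a\cdot(b\cdot\w)$. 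Here one extends the action $\cdot$ from $\poly\cc$ to $\ser\cc$ so that $a\,\cdot$ can be applied to $b\cdot v\in\ser\cc$; the hypothesis that $ab$ is well-defined is exactly the finiteness condition that makes this extension, and the equality $(ab)\cdot=(a\,\cdot)\circ(b\,\cdot)$, legitimate.

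Given the homomorphism property, \eqref{eq: Fab} is the purely formal Leibniz identity for commutators applied to the composite $(ab)\cdot=(a\,\cdot)\circ(b\,\cdot)$:
\begin{align*}
\FF{ab}
&=\F\circ(ab)-(ab)\circ\F
=\F\circ a\circ b-a\circ b\circ\F\\
&=\bigl(\F\circ a-a\circ\F\bigr)\circ b+a\circ\bigl(\F\circ b-b\circ\F\bigr)
=\FF{a}\,b+a\,\FF{b},
\end{align*}
where I have inserted and cancelled the middle term $a\circ\F\circ b$. By the first step each composite here is a well-defined operator $\poly\cc\to\ser\cc$. For \eqref{eq: Fa*} I would use that $\FF{\arg}$ is linear in its argument and that $\FF{1}=\F\circ\id-\id\circ\F=0$. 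Since $a^\ast=(1-a)\inv$ satisfies $a^\ast=1+a\,a^\ast$, applying $\FF{\arg}$ and \eqref{eq: Fab} gives
\[
\FF{a^\ast}=\FF{1}+\FF{a\,a^\ast}=\FF{a}\,a^\ast+a\,\FF{a^\ast},
\]
so that $(1-a)\,\FF{a^\ast}=\FF{a}\,a^\ast$ as operators. Composing on the left with $a^\ast$ and using $a^\ast(1-a)=1$ (equivalently, that $(1-a)\,\cdot$ is invertible with inverse $a^\ast\,\cdot$, which is where the hypothesis that $a^\ast$ is well-defined enters) yields $\FF{a^\ast}=a^\ast\,\FF{a}\,a^\ast$.

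The only real obstacle I anticipate is well-definedness bookkeeping, not any deep algebra. On the one hand, formula \eqref{eq: Fa formula} shows that for \emph{any} $c\in\ser\fg$ the operator $\FF{c}$ is well-defined on $\poly\ee$ (hence on $\poly\cc$): for a fixed edge $\{\pp\w,\w\}$ and a fixed target word, the factorization $\alpha=\gamma\overline{\w}$ forced by $\overline\w\in\suff\alpha$ leaves at most finitely many (indeed one) contributing $\alpha$ for each of the terms $\alpha\pp\w$ and $\alpha\w$. Thus $\FF{ab}$ and $\FF{a^\ast}$ are automatically well-defined operators as soon as $ab$ and $a^\ast$ are well-defined elements of $\ser\fg$, which is the hypothesis. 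The care needed is entirely in the \emph{intermediate} composites $(a\,\cdot)\circ(b\,\cdot)$ and $a\,\FF{a^\ast}$ acting on $\ser\cc$; these are exactly guaranteed by the well-definedness of $ab$ and of $a^\ast$, so beyond the module property established in the first step no separate convergence analysis is required.
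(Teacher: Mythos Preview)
Your overall strategy is the same as the paper's: both identities are read off the Leibniz rule for the commutator $[\F,\,\cdot\,]$, with the only genuine content being well-definedness of the intermediate operators. Your derivation of \eqref{eq: Fa*} via $a^\ast=1+aa^\ast$ is a pleasant variant of the paper's direct manipulation, but it still rests on \eqref{eq: Fab} and hence on the same well-definedness issue.

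The gap is in your last paragraph. You assert that ``beyond the module property \ldots\ no separate convergence analysis is required,'' and that the only composites needing care are $(a\,\cdot)\circ(b\,\cdot)$ and $a\,\FF{a^\ast}$. But the Leibniz computation inserts and cancels the middle term $a\circ\F\circ b$, and \emph{that} is the operator whose well-definedness is not covered by the module property. The module identity $(ab)\cdot v=a\cdot(b\cdot v)$ only tells you that $a\,\cdot$ can be applied to the specific series $b\cdot v$; it says nothing about $a\,\cdot$ applied to $\F(b\cdot v)$, which is a different element of $\ser\cc$. Concretely, for an edge $e=\{\pp\w,\w\}$ one has $\F(b\cdot e)=\sum_\beta b_\beta\,\nu_\beta$ with $\nu_\beta\in\{\beta\pp\w,\beta\w\}$ depending on $\beta$, and one must check that for each target $\upsilon$ only finitely many pairs $(\alpha,\beta)$ contribute to $a\cdot\F(b\cdot e)$. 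That reduces to the finiteness of $\{(\alpha,\beta):\alpha\beta=\upsilon\overline{\pp\w}\}$ and of $\{(\alpha,\beta):\alpha\beta=\upsilon\overline{\w}\}$ in $\supp a\times\supp b$, which does follow from $ab$ being well-defined---but this \emph{is} a separate (if short) convergence analysis, and it uses the specific feature of $\F$ that it sends an edge to one of its endpoints. The paper isolates exactly this point as its Lemma~\ref{th: keylemma}. Your proof becomes correct once you insert that verification; without it, the phrase ``by the first step each composite here is a well-defined operator'' is an overreach.
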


\begin{proof}[Proof of Identity \eqref{eq: Fab}] 
First, some seemingly inoccuous algebra:
\[
\FF{ab} = \F ab - ab\F = (\F ab - a\F b) + (a\F b - ab\F) = \FF{a}b + a\FF{b}.
\]
Supposing $a,b,ab\in\ser\fg$, the operators $\FF{a},\, \FF{b},\, \FF{ab}\colon \poly\ee\rightarrow\ser\vv$ are well-defined, in particular $\F ab$ and $ab\F$ are well-defined operators by themselves. In order to conclude that the left- and right-hand sides of \eqref{eq: Fab} are equal, we must check that $a\F b$ is also well-defined as an operator. This is handled in the next lemma.
\end{proof}

\begin{lemm}\label{th: keylemma} If $a,b,$ and $ab$ belong to $\ser\fg$, then $a\F b$ is a well-defined operator from $\poly\ee$ to $\ser\fg$.
\end{lemm}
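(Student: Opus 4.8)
The plan is to show that when we apply $a\F b$ to a single edge $\{\pp\w,\w\}\in\ee$, the resulting element of $\ser\cc$ (in fact of $\ser\fg$, once $\F$ has flattened edges back to vertices) has every coefficient given by a \emph{finite} sum, using the hypothesis that $a$, $b$, and $ab$ are all genuine series. First I would unwind the definitions: $b\cdot\{\pp\w,\w\} = \sum_{\beta} b_\beta\{\beta\pp\w,\beta\w\}$, then $\F$ sends each edge $\{\beta\pp\w,\beta\w\}$ to the vertex $\gamma$ which is the longer of $\beta\pp\w$ and $\beta\w$ (after reduction in $\fg$), and finally $a$ multiplies on the left. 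So the issue is purely whether, for a fixed target word $\omega\in\fg$, the coefficient $(a\F b\cdot\{\pp\w,\w\},\,\omega)$ is a finite sum over the pairs $(\alpha,\beta)$ contributing to it.

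The key step is to analyze the two regimes coming from the piecewise definition of $\F$ on vertices, exactly as in the formula \eqref{eq: Fa formula}. Writing $b\cdot\{\pp\w,\w\}$ as a combination of edges and then splitting according to whether $\overline{\w}\in\suff{\beta}$ or not, one finds that $\F b\cdot\{\pp\w,\w\}$ is a series-like object of the form $\sum_\beta b_\beta\,\beta\w + (\text{correction terms } \sum_{\overline\w\in\suff\beta} b_\beta\,\beta(\pp\w-\w))$. The first, ``generic'' part is simply $(b\cdot\w)$ shifted, i.e.\ essentially $b$ multiplied by the fixed group element $\w$ on the right; multiplying on the left by $a$ then produces $a\,(b\w) = (ab)\w$, whose coefficients are finite sums precisely because $ab\in\ser\fg$ by hypothesis. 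This is where the assumption on $ab$ does its work. The correction part is supported on $\beta$ with $\overline\w\in\suff\beta$, a condition forcing $\beta$ to end in a prescribed suffix; I would argue this restricts $\beta$ so that the left-multiplication by $a$ again yields only finite coefficient sums, reducing to the already-well-defined operators $\F a$, $a\F$, or to a bounded-length correction.

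The main obstacle I anticipate is the cancellation bookkeeping when $\beta\w$ is \emph{not} reduced, i.e.\ when the product in $\fg$ collapses length — this is exactly the case $\overline\w\in\suff\beta$ flagged above, and it is where the naive decomposition $a\F b = (\text{something involving }ab)$ can fail to be term-by-term finite unless one is careful. The trick will be to show that on the troublesome set of $\beta$'s the expression $\F b\cdot\{\pp\w,\w\}$ differs from the clean $\sum_\beta b_\beta\beta\w$ only by terms of the form $\beta(\pp\w-\w)$ with $\beta$ of bounded suffix shape, so that after left-multiplication by $a$ the coefficient of any fixed $\omega$ collects contributions only from finitely many $(\alpha,\beta)$. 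Concretely, for fixed $\omega$ and fixed $\w$, the equation $\alpha\cdot(\beta\pp\w)=\omega$ (or $\alpha\cdot(\beta\w)=\omega$) together with the finiteness built into $ab$ pins down $\beta$ up to finitely many choices, and then $\alpha$ up to finitely many choices; assembling these finiteness statements over the two regimes gives that every coefficient of $a\F b\cdot\{\pp\w,\w\}$ is a finite sum, so $a\F b$ is a well-defined operator $\poly\ee\to\ser\fg$.
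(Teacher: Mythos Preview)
Your proposal is correct and, in its final paragraph, lands on precisely the paper's argument: for a fixed target vertex $\upsilon$, the equation $\alpha\beta\pp\w=\upsilon$ (resp.\ $\alpha\beta\w=\upsilon$) determines the product $\alpha\beta$ exactly, and then the well-definedness of $ab$ forces only finitely many contributing pairs $(\alpha,\beta)$ with $a_\alpha b_\beta\neq 0$. The intermediate decomposition $\F b\{\pp\w,\w\}=b\w+(\text{correction})$ you sketch in the second paragraph is a valid repackaging---once you observe that the correction term is $a$ times a series whose support lies inside $\suppx{b}$, so its Cauchy product with $a$ is a subsum of the one defining $ab$---but it is not needed once you have the direct finiteness argument, and the paper does not take that detour.
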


\begin{proof} Write $a=\sum_{\alpha\in\fg} a_\alpha \alpha$ and $b=\sum_{\beta\in\fg} b_\beta \beta$. Since $ab\in\ser\fg$, we have
\begin{gather}\label{eq: c=ab}
ab = \sum_{\gamma\in\fg} \Biggl(\,\sum_{\substack{\alpha,\beta\in\fg \\ \gamma=\alpha\beta}} \!a_\alpha\, b_\beta\Biggr) \gamma ,
\end{gather}
with $\sum_{\gamma=\alpha\beta} a_\alpha\, b_\beta$ involving finitely many nonzero terms for each $\gamma\in\fg$. The effect of $a\F b$ on $\big\{\pp{\omega},\omega\big\}\in\ee$ is as follows:
\begin{align}
\notag a\F b\,\big\{\pp{\omega},\omega\big\} &= a\F\sum_{\beta\in\vv} b_\beta\,  \big\{\beta\pp{\omega},\beta\omega\big\} \\
\notag &= a\Biggl(\,\sum_{\substack{\beta\in\vv\rule[-2pt]{0pt}{4pt} \\ \overline{\omega}\in\suff{\beta}}} \!b_\beta\, \beta\pp{\omega} + \sum_{\substack{\beta\in\vv\rule[-2pt]{0pt}{4pt} \\ \overline{\omega}\not\in\suff{\beta}}} \!b_\beta\, \beta\omega \Biggr) \\
\label{eq: aPb} &= \sum_{\substack{\alpha,\beta\in\vv\rule[-2pt]{0pt}{4pt} \\ \overline{\omega}\in\suff{\beta}}} \!a_\alpha b_\beta\, \alpha\beta\pp{\omega} + \sum_{\substack{\alpha,\beta\in\vv\rule[-2pt]{0pt}{4pt} \\ \overline{\omega}\not\in\suff{\beta}}} \!a_\alpha b_\beta\, \alpha\beta\omega  \,.
\end{align}
We claim that no $\uu\in\vv$ appears infinitely often in \eqref{eq: aPb}. Indeed, suppose $\{(\alpha_{i},\beta_{i})\}$ is an infinite sequence satisfying $\upsilon\in\big\{\alpha_{i\,}\beta_{i\,}\pp{\omega},\,\alpha_{i\,}\beta_{i\,}\omega \big\}$ for all $i$. Then there is an infinite subsequence $\{(\alpha_{j},\beta_{j})\}$ satisfying $\forall j$, $\uu = \alpha_{j\,}\beta_{j\,}\pp\omega$ or $\forall j$, $\uu = \alpha_{j\,}\beta_{j\,}{\omega}.$ The two cases are similar; we consider the latter. 
%
%
Let $\uu'{\omega}$ denote the common value of the elements of the subsequence, so $\alpha_j\beta_j=\uu'$ ($\forall j$). Deduce from \eqref{eq: c=ab} that only a finite number of the $\alpha_{j\,}\beta_{j}$ appear with nonzero coefficient $a_{\alpha_j}b_{\beta_j}$. This proves the claim and the Lemma.
\end{proof}


\begin{proof}[Proof of Identity \eqref{eq: Fa*}] 
If $a\in\ser\fg$ is such that $a^{\ast}$ is well-defined, then $a^{\ast} a$, $a a^{\ast}$, $a^{\ast}a^{\ast}$ and $a^{\ast} a a^{\ast}$ are also well-defined, and $(1-a)a^{\ast} = a^{\ast}(1-a) = 1$. Thus we may write
\begin{eqnarray*}
\FF{a^{\ast}} &=& \F a^{\ast} - a^{\ast}\F = a^{\ast}(1-a)\F a^{\ast} - a^{\ast}\F(1-a)a^{\ast} \\
&=& a^{\ast}\F a^{\ast} - a^{\ast}a\F a^{\ast} - a^{\ast}\F a^{\ast} + a^{\ast}\F a a^{\ast} = a^{\ast}\FF{a}a^{\ast}.
\end{eqnarray*}
The above observations and Lemma \ref{th: keylemma} guarantee that the operators $a^{\ast}\F a^{\ast}$, $a^{\ast}a\F a^{\ast}$ and $a^{\ast}\F a a^{\ast}$ used in the intermediate steps are well-defined. Conclude that \eqref{eq: Fa*} holds whenever $a^{\ast}$ is well-defined.
\end{proof}

\subsection{Closed subspaces of $\ser{\vv}$}\label{sec: closed}
The following is a standard result on topological vector spaces. We include a proof for the sake of completeness.

\begin{lemm}\label{th: closed} Suppose $V$ is a finite dimensional subspace of $\ser\vv$. Then $V$ is closed with respect to the product topology on $\ser\vv$ extending the discrete topology on $\K$.
\end{lemm}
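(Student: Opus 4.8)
The plan is to produce a \emph{continuous} linear idempotent $\Lambda\colon\ser\vv\to\ser\vv$ whose image is exactly $V$. Once such a $\Lambda$ is in hand, the conclusion is immediate: $V=\img\Lambda=\{a:\Lambda a=a\}=\ker(\id-\Lambda)$, and since $\ser\vv$ is Hausdorff (an arbitrary product of the discrete, hence Hausdorff, space $\K$) the singleton $\{0\}$ is closed, so $V=(\id-\Lambda)^{-1}\big(\{0\}\big)$ is closed as the preimage of a closed set under a continuous map. Thus the entire problem reduces to building $\Lambda$.

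To build $\Lambda$, first fix a basis $e_1,\dots,e_n$ of $V$. The key geometric input is that finitely many coordinates already ``see'' all of $V$: I claim there is a finite set $F=\{\w_1,\dots,\w_n\}\subseteq\fg$ for which the coordinate projection $\pi_F\colon\ser\vv\to\K^F$, $a\mapsto\big((a,\w_j)\big)_j$, restricts to a \emph{linear isomorphism} $\phi:=\pi_F\big|_V\colon V\xrightarrow{\ \sim\ }\K^F$. Granting this, set $\Lambda:=\phi^{-1}\circ\pi_F$. Then $\Lambda$ is linear with image inside $V$, and for $v\in V$ one has $\Lambda v=\phi^{-1}(\phi(v))=v$; hence $\Lambda$ fixes $V$ pointwise, is idempotent, and has image exactly $V$.

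Continuity of $\Lambda$ is where the discrete topology does all the work, and it comes essentially for free. The projection $\pi_F$ is continuous by the definition of the product topology. The crucial observation is that $\K^F$, being a \emph{finite} product of copies of the discrete space $\K$, is itself discrete; consequently any map out of $\K^F$ — in particular $\phi^{-1}\colon\K^F\to\ser\vv$ — is automatically continuous. Therefore $\Lambda=\phi^{-1}\circ\pi_F$ is continuous, which completes the construction.

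The one step that is not purely formal, and which I expect to be the main obstacle, is the existence of the finite separating set $F$; this is exactly where finite-dimensionality enters. Viewing $e_1,\dots,e_n$ as the rows of an $n\times\infty$ matrix $\big[(e_i,\w)\big]_{i,\,\w\in\fg}$, their linear independence forces the column space to be all of $\K^n$ — otherwise a nonzero functional $(c_1,\dots,c_n)$ vanishing on every column would yield the nontrivial relation $\sum_i c_i e_i=0$. Hence one may select $n$ columns $\w_1,\dots,\w_n$ whose associated $n\times n$ minor $\big[(e_i,\w_j)\big]$ is invertible, and with $F=\{\w_1,\dots,\w_n\}$ the restricted projection $\pi_F\big|_V$ is then a bijection onto $\K^F$, as claimed. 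Everything downstream of this elementary linear-algebra fact is soft.
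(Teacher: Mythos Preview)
Your proof is correct, and its linear-algebraic core coincides with the paper's: both arguments hinge on selecting $n$ group elements $\w_1,\dots,\w_n$ so that the matrix $\big[(e_i,\w_j)\big]$ is invertible (the paper's step~(iii), your ``main obstacle''). Where the two diverge is in packaging. The paper argues sequentially: given $a_n\to a$ with $a_n\in V$, it solves the linear system to show the coordinate vectors $(s_1^{(n)},\dots,s_m^{(n)})$ are eventually constant, whence $a_n$ is eventually constant and $a\in V$. You instead assemble the same data into a continuous linear retraction $\Lambda=\phi^{-1}\circ\pi_F$ onto $V$ and read off closedness from $V=\ker(\id-\Lambda)$. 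Your route is a touch slicker and has the minor advantage of not relying on sequences to detect closedness---the paper's argument tacitly uses that $\ser\fg$ is first countable (true here, since $\fg$ is countable), whereas yours works for an arbitrary index set. Conversely, the paper's version is more hands-on and avoids invoking continuity of subtraction on $\ser\fg$ (which is of course immediate coordinatewise). Neither gains anything substantive over the other; they are two wrappings of the same invertible-minor observation.
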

\begin{proof} In five easy steps.
\smallskip 

\textit{(i).\,} Recall that in the discrete topology on $\K$, a sequence converges if and only if it is eventually constant.
\smallskip 

\textit{(ii).\,} Fix $a\in \ser{\fg}$ and write $a = \sum_\alpha (a,\alpha)\,\alpha$. The product topology on $\ser\vv$ is such that a sequence $\{a_n \in \ser\vv\} \subseteq V$ converges to $a$ if and only if for each $\alpha\in\vv$, there exists $N_\alpha$ satisfying $(a_n,\alpha)=(a,\alpha)$ for all $n>N_\alpha$. 
\smallskip 

\textit{(iii).\,} Suppose that $b_1,\dotsc, b_m$ is a basis for the finite dimensional subspace $V\subseteq\ser\vv$. Find elements $\alpha_1,\dotsc, \alpha_m \in\vv$ satisfying
\begin{gather*}
\det\! \begin{pmatrix}
(b_1,\alpha_1) & (b_2,\alpha_1) & \dotsb & (b_m,\alpha_1) \\
(b_1,\alpha_2) & (b_2,\alpha_2) & \dotsb & (b_m,\alpha_2) \\
\vdots & \vdots & \ddots & \vdots \\
(b_1,\alpha_m) & (b_2,\alpha_m) & \dotsb & (b_m,\alpha_m) 
\end{pmatrix} \neq 0,
\end{gather*}
which are guaranteed to exist by the independence of $b_1,\dotsc,b_m$. 
\smallskip 

\textit{(iv).\,} Given $\{a_n\}\to a$ as above, find constants $s_i^{(n)}\in \K$ satisfying
\begin{equation}\label{eq: an=sum}
a_n = s_1^{(n)} b_1 + s_2^{(n)} b_2 + \dotsb + s_m^{(n)} b_m \quad (\forall n).
\end{equation}
Put $N=\max\left\{N_{\alpha_1},\dotsc,N_{\alpha_m}\right\}$, with $N_{\alpha_i}$ determined as in (ii). Let $B$ be the matrix found in (iii). From \eqref{eq: an=sum}, we have the system 
\begin{gather*}
\begin{pmatrix}
(b_1,\alpha_1) & \dotsb & (b_m,\alpha_1) \\
\vdots & \ddots & \vdots \\
(b_1,\alpha_m) & \dotsb & (b_m,\alpha_m) 
\end{pmatrix}\!\begin{pmatrix}
s_1 ^{(n)} \\
\vdots \\
s_m^{(n)}
\end{pmatrix} = \begin{pmatrix}
(a_n,\alpha_1) \\
\vdots \\
(a_n,\alpha_m)
\end{pmatrix}.
\end{gather*}
Note that the right-hand side is constant for $n>N$. Specifically, it equals $[(a,\alpha_1) \ \dotsb \ (a,\alpha_m)]^T$. Since $B$ is invertible, this system has a unique solution $[s_1 \ \dotsb \ s_m]^T$ independent of $n$ (for $n > N$). 
\smallskip 

\textit{(v).\,} We claim that
\[
	a= s_1 b_1 + s_2 b_2 + \dotsb + s_m b_m \,.
\]
To see this, note that \eqref{eq: an=sum} reduces to 
$
	a_n= s_1 b_1 + \dotsb + s_m b_m
$ 
for $n>N$, since the $b_i$ are linearly independent. It follows that $a_n=a$ for all $n>N$ and the lemma is proven.
\end{proof}

\subsection{Connes operators of rational elements}\label{sec: forward_implication}
Given the preparatory results in Sections \ref{sec: F-identities} and \ref{sec: closed}, we are ready to prove that Connes operators of rational elements have finite rank.

\begin{proof}[Proof of Theorem \ref{th: new_connes_criterion} (Forward~Implication)]
It is enough to induct on the $(+,\times,\ast)$-complexity of a rational series $a\in\ser\fg$. We check that if $a\in\poly\fg$ then $\FF{a}$ has finite rank and that the finite rank condition is closed under $+$, $\times$ and $\ast$. 
\smallskip

\par
\emph{(i):\, Closed under $+$.\,} Suppose $a=a' + a''$ with $\FF{a'}$ and $\FF{a''}$ finite rank. Then $\img\,\FF{a}\subseteq \img\,\FF{a'} + \img\,\FF{a''}$, thus $\FF{a}$ has finite rank as well.
\smallskip 

\emph{(ii):\, Polynomials have finite rank.\,} After (i) we may assume $a$ is a monomial $\alpha\in\fg$. Apply $\FF{a}$ to some $e=\sum_{\omega} e_{\omega}\,\big\{\pp{\omega},\omega\big\}$ to get
\[
\sum_{\omega \colon \overline{\omega}\in\suff{\alpha}} e_\omega \alpha \bigl(\pp\omega - \omega\bigr) .
\]
Conclude that the rank of $\FF{a}$ is precisely $\ell(\alpha)$, the cardinality of $\suff{\alpha}$.
\smallskip 

\emph{(iii):\, Closed under $\times$.\,} Given rational series $a,b$ with $\FF{a},\FF{b}$ finite rank, consider $\FF{ab}$. By \eqref{eq: Fab} we have
\[
\img\FF{ab} \subseteq \img a\FF{b} + \img\FF{a}b.
\]
Since $\FF{b}$ has finite rank, we know that $\img a\FF{b}$ is finite dimensional. To control $\img \FF{a}b$, we use the fact that $\img\FF{a}$ is a closed subspace (Lemma \ref{th: closed}). First, approximate $b=\sum_\beta b_\beta \, \beta$ by the sequence 
\[
b_n := \sum_{\beta\colon \ell(\beta)\leq n} b_\beta\, \beta \,.
\]
Given $e\in\poly\ee$, note that $\{b_n e\} \rightarrow be$. This means $\FF{a}b_n e \rightarrow \FF{a}be$ and $\FF{a}be$ belongs to $\img\FF{a}$ 
(since this space is closed). Conclude that $\img\FF{a}b$ is finite dimensional.
\smallskip 

\emph{(iv):\, Closed under $\ast$.\,} Suppose $a\in\ser\fg$ with $a^{\ast}$ well-defined and $\FF{a}$ having finite rank. Use \eqref{eq: Fa*} to write $\FF{a^{\ast}} = a^{\ast}\FF{a}a^{\ast}$. The two arguments in (iii) may be combined to conclude that $a^{\ast}\FF{a}a^{\ast}$ has finite rank.
\end{proof}

\subsection{Connes operators with finite rank}\label{sec: reverse_implication}
We first reprise Proposition 5 in \cite{DucReu:1997} to bound the rank of $\FF{a}$ when $a$ is rational, making the necessary changes to work over $\ser\fg$ instead of $\mn\fg$. The completed proof of Theorem \ref{th: new_connes_criterion} then follows.

Given $\w\in\vv\setminus\{1\}$ and $a=\sum_{\alpha \in\fg} a_\alpha \, \alpha \in\ser\fg$, we define the series $\w \circc a\in\ser\vv$ by
\begin{gather}\label{eq: Hankel}
	\w\circc a  := \sum_{%
	\alpha\doteq\oo\alpha\w} a_\alpha \oo\alpha 
\end{gather}
and extend this to define a bilinear map $\circc \colon \poly(\vv\setminus \{1\}) \otimes \ser\fg \to \ser\fg$ in the natural way. (The symbol $\doteq$ has been defined in Section \ref{sec: reduced words}.) We now compare the {Connes rank} of $\FF{a}$ on $\poly\ee$ to the {Hankel rank} of ${\circc a}$ on $\poly(\vv\setminus\{1\})$.

\begin{prop}\label{th:rank bounds} Fix $|X|=n$. If $a\in\ser\vv$ belongs to the rational closure of $\poly\fg$ in $\ser\vv$ then the Connes and Hankel ranks of $a$ are related by
\begin{gather*}
\frac{1}{2n}\, \rnk{\circc a} \leq \rnk {\FF{a}} \leq 2n\,\rnk{\circc a} \,.
\end{gather*}
\end{prop}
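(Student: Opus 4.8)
The plan is to reduce everything to a single identity that realizes the Connes operator through the Hankel translates, after which the two ranks differ only by the bookkeeping of initial letters. First I would evaluate $\FF{a}$ on an edge via \eqref{eq: Fa formula}. For a reduced word $\w\in\vv\setminus\{1\}$ with final letter $\fin\w=y$, the condition $\overline{\w}\in\suff{\alpha}$ means exactly $\alpha\doteq\oo\alpha\,\overline{\w}$, and the cancellations $\overline{\w}\,\w=1$ and $\overline{\w}\,\pp\w=\overline y$ turn the sum into
\[
\FF{a}\big\{\pp\w,\w\big\} = \big(\overline{\w}\circc a\big)\,(\overline y-1).
\]
As $\w$ runs over $\vv\setminus\{1\}$ so does $u:=\overline{\w}$, and its initial letter is $\ini u=\overline y$. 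Hence $\img\FF{a}$ is spanned by the vectors $(u\circc a)(\ini u-1)$ while $\img(\circc a)$ is spanned by the $u\circc a$; the Connes and Hankel ranks are the dimensions of these two spans.

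Next I would group the generators by initial letter. For each $z\in\bxx$ set $H_z=\mathrm{span}\{u\circc a:\ini u=z\}$ and $C_z=H_z(z-1)=\mathrm{span}\{(u\circc a)(z-1):\ini u=z\}$, so that $\rnk\circc a=\dim\sum_{z\in\bxx} H_z$ and $\rnk\FF{a}=\dim\sum_{z\in\bxx} C_z$, each a sum over the $|\bxx|=2n$ possible initial letters. Right multiplication by $(z-1)$ carries $H_z$ onto $C_z$, so $\dim C_z\le\dim H_z$ always. Both inequalities of the proposition then follow from the reverse bound $\dim H_z\le\dim C_z$ — i.e.\ injectivity of right multiplication by $(z-1)$ on $H_z$ — since one obtains $\dim C\le\sum_z\dim C_z=\sum_z\dim H_z\le 2n\,\dim H$ and, symmetrically, $\dim H\le\sum_z\dim H_z=\sum_z\dim C_z\le 2n\,\dim C$, the first chain using $H_z\subseteq H$ and the second using $C_z\subseteq C$.

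The heart of the matter is this injectivity, which cannot hold on all of $\ser\vv$ (right multiplication by $(z-1)$ kills $\sum_{k\in\Z}z^k$), so I would exploit a support constraint. Since $\alpha\doteq\oo\alpha u$ requires $\oo\alpha u$ to be reduced, every generator $u\circc a$ with $\ini u=z$ is supported on words not ending in $\overline z$; thus it suffices to prove that $v\mapsto v(z-1)$ is injective on series whose support avoids words ending in $\overline z$. Given such a $v$, I would observe that $vz$ is supported on words ending in $z$, so comparing $v(z-1)=vz-v$ on words not ending in $z$ forces $v$ itself to be supported on words ending in $z$; there $vz=v$ says that $v$ is constant along each right orbit $\{\omega z^{k}:k\in\Z\}$, and stripping trailing $z$'s exhibits in every such orbit a word not ending in $z$, where $v$ vanishes — whence $v\equiv0$. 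This yields $\dim H_z=\dim C_z$ and completes the proof. I would note, finally, that the argument never invokes rationality: the same bounds hold verbatim for an arbitrary $a\in\ser\vv$ (with ranks read in $\N\cup\{\infty\}$), which is what makes them usable in the reverse implication before $a$ is known to be rational.
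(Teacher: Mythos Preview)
Your argument is correct, and both proofs rest on the same identity $\FF{a}\{\pp\w,\w\}=(\overline{\w}\circc a)(\overline{y}-1)$ and the same grouping of generators by the $2n$ letters of $\bxx$. The one real difference is how the lower bound is obtained. The paper inverts the factor $(\overline{y}-1)$ explicitly: it right-multiplies $\FF{a}\{\pp\w,\w\}$ by $-\overline{y}^{\,*}$, checks carefully that this product is a well-defined series (no word occurs infinitely often), and recognizes the result as $\overline{\w}\circc a$. You instead prove directly that right multiplication by $(z-1)$ is injective on series supported off words ending in $\overline z$, via the orbit argument. Your route is a bit more elementary, avoiding the well-definedness verification for an infinite sum; the paper's route has the advantage of producing an explicit inverse operator $T^{\mathsf v}_{a,\overline{y}^{\,*}}$ that factors through $\FF{a}$. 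Your closing remark is apt: neither proof uses rationality of $a$, so the inequalities hold for arbitrary $a\in\ser\vv$ with ranks valued in $\N\cup\{\infty\}$, exactly as needed for the reverse implication of Theorem~\ref{th: new_connes_criterion}.
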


\begin{rema} The upper bound on $\rnk \FF{a}$ is tight. Consider $X=\{x,y\}$ and the series $a=x+y+\overline{x} + \overline{y}$. 
\end{rema}

\begin{proof} Fix $\omega=\pp\omega x\in\fg$ and $a=\sum a_\alpha\, \alpha \in\ser\fg$. The proof rests on the observation that
\begin{align}\label{eq: auxtrans}
	\FF{a}\{\pp\w,\w\} = (\overline{\w} \circc a)\cdot(\overline{x}-1) .
\end{align}
%
(Compare \eqref{eq: Fa formula}.) To demonstrate the second inequality, we define auxillary linear transformations $\auxtrans{a}{\F}{y} \colon \poly\ee \rightarrow \ser\vv$ for each $y\in X\cup \overline X$ by 
\[
\{\pp\w, \w\}  \mapsto \bigl(\overline{\w}\circc a\bigr)\cdot(y-1) = \sum_{%
\alpha\doteq\oo\alpha\overline\w} a_\alpha\,\oo\alpha (y-1).
\]
From \eqref{eq: Hankel} we have $\rnk \auxtrans{a}{\F}{y} \leq \rnk{\circc a}$, and from \eqref{eq: auxtrans} we have $\img \FF{a} \subseteq \sum_{y} \img \auxtrans{a}{\F}{y}$. So indeed, 
$
\rnk \FF{a} \leq 2n\, \rnk{\circc a}.
$
\smallskip

To demonstrate the first inequality, we build (partially defined) operators $\revtrans{a}{\F}{y} \colon \poly\ee \rightarrow \ser\vv$ for each $y\in \bxx$ as follows:
\[
\{\pp\w,\w\} \mapsto \Bigl(\FF{a}\{\pp\w,\w\}\Bigr)\cdot(-y^*) = -\Biggl[\sum_{\alpha \colon \overline{\omega}\in\suff{\alpha}} a_\alpha\, \alpha\,\bigl(\pp\w-\w\bigr) \Biggr]y^{\ast} \,,
\]
with $y^* = 1 + y + y^2 + \dotsb$, as usual. The set on which any $\revtrans{a}{\F}{y}$ is defined is a linear subspace of $\K\ee$, and it acts linearly there. We claim that $\revtrans{a}{\F}{\overline x} \{\pp\w,\w\}$ is well-defined. If so, we are done, because
\begin{align*}
\revtrans{a}{\F}{\overline x} \{\pp\w,\w\} &= -\Biggl[\sum_{%
\alpha\colon \overline\w\in\suff{\alpha}} a_\alpha\,\alpha \bigl(\pp\w -\w\bigr) \Biggr]\overline{x}^{\,\ast} 
=\Bigg[\sum_{\alpha\doteq\oo\alpha\overline\w} a_\alpha\,\oo\alpha \Bigg]\Bigg[\left( 1-\sbar{x} \right) \sbar{x}^{\,\ast}\Bigg] \\
&= \Bigg[\sum_{\alpha\doteq\oo\alpha\overline\w} a_\alpha\,\oo\alpha \Bigg] = \overline{\omega}\circc a .
\end{align*}
In other words, $\img{\circc a} \subseteq \sum_y \img \revtrans{a}{\F}{y}$, from which it follows that 
$
\rnk{\circc a} \leq 2n\, \rnk \revtrans{a}{\F}{\overline x} \leq 2n\, \rnk \FF{a} .
$

Toward verifying the claim, we write $\FF{a}\,\{\pp\w,\w\}$ in standard series form, namely
\[
\FF{a}\,\{\pp\w,\w\} = \sum_{\alpha \colon x\not\in\suff{\alpha}} \left(a_{\alpha\overline{\omega}} - a_{\alpha\sbar{x}\overline{\omega}} \right)\alpha\sbar{x} .
\]
The expression we must analyze is
\[
\revtrans{a}{\F}{\overline x}\{\pp\w,\w\} = \sum_{\substack{l\in\N,\,\alpha\in\fg \\ x\not\in\suff{\alpha}}} \left(a_{\alpha\overline{\omega}} - a_{\alpha\sbar{x}\overline{\omega}} \right)\alpha\sbar{x}\overline{x}^{\,l} .
\]
Fix $\uu\in\fg$ and suppose $\left\{(\alpha_i,l_i) : i\in I\right\}$ is a sequence satisfying $\alpha_i \sbar{x}\overline{x}^{\,l_i} = \uu$. For each $l$ appearing in the sequence $(l_i)$, there is a unique allowable $\alpha_i$, so we may order $\left\{(\alpha_i,l_i) : i\in I\right\}$ by its second factor, saying $(\alpha_i,l_i) < (\alpha_j,l_j)$ whenever $l_i<l_j$. Put $l_1$ equal to the minimum $l$ occuring. Since each $\alpha_i$ satisfies $x\not\in\suff{\alpha_i}$, the equalities $\uu = \alpha_i\sbar{x} \overline{x}^{\,l_i}$ are all reduced factorizations of $\uu$. Finally, the chain
\[
\alpha_1\sbar{x}\overline{x}^{\,l_1} = \alpha_2\sbar{x}\overline{x}^{\,l_2} = \dotsb \quad (l_1<l_2<\dotsb)
\]
forces $\alpha_2$ to be a proper prefix of $\alpha_1$, and forces $\alpha_3$ to be a proper prefix of $\alpha_2$, and so on. Since the set of prefixes of $\uu$ is finite, $I=I(\uu)$ is a finite set and $\uu$ does not occur infinitely often in $\revtrans{a}{\F}{\overline x}$.
\end{proof}

\begin{proof}[Proof of Theorem \ref{th: new_connes_criterion} (Reverse Implication)] 
Identify $\fg$ with $\red\XX$, then use Theorem \ref{th: Fliess} and Proposition \ref{th:rank bounds}.
\end{proof}

\section{Applications to $\mn\fg$ and $\ff{X}$}
\label{sec: implications}

It is easy to see that our proof of Theorem \ref{th: new_connes_criterion} works equally for Malcev--Neumann series, thereby giving a complete proof of Theorem \ref{th: connes_criterion}. Here we discuss two further results stemming from the notion of rational series in $\ser\fg$. 

\subsection{Expressions without simplifications, effectively}\label{sec: reduced-effectively} 
Recall that an element $\omega = x_1 \dotsb x_n$ ($x_i \in X$ or $x_i^{-1} \in X$) in $\fg$ is called reduced if no pair $(i,i+1)$ satisfies $x_ix_{i+1}=1_\fg$. This is the starting point for our next notion: \emph{($*$-rational) expressions without simplification} in $\ser\fg$. A sum of monomials in $\poly\fg$ is an expression without simplification if each monomial involved is reduced. More generally, the sum $a+b$ of two expressions without simplification is again an expression without simplification. (We are only concerned with multiplicative simplifications, so $(xy\overline{x} + x) + (y^2 - x)$ is an expression without simplification.) Continuing, the product $a\cdot b$ of two expressions without simplification is an expression without simplification if for every monomial $\alpha$ in the support of $a$ and every monomial $\beta$ in the support of $b$, the monomial $\alpha\beta$ is reduced, i.e., $\ell(\alpha) + \ell(\beta) = \ell(\alpha\beta)$. If $a$ is an expression without simplification and $a^*$ is well-defined, then $a^*$ is an expression without simplification if and only if $a\cdot a$ is an expression without simplification. 

\begin{exam}\label{ex: simplifications} As a simple example, note that $\sbar{x}x^*$ may be written $\overline{x} + x^*$. For another example, consider $(\overline{x}yx)^*$, which is a legal expression $\ser\fg$ (and even in $\mn\fg$ if $1<y$ and $xy<yx$). On the one hand, it is equal to $\sum_{n\geq0} (\overline{x}yx)^n$, which has many simplifications; on the other hand, it is equal to $\overline{x}y^*x$, which has none.
\end{exam}

\subsubsection{Rational series in $\ser\fg$}
Given a well-defined $*$-rational series $a\in\ser\fg$, we know that the corresponding series of reduced words $S \in \monser{\bxx}$ is rational \cite{Ben:1969}. 
Our goal is an effective algorithm that yields a rational expression for $S$ without simplification. It suffices to make a careful analysis of Fliess' construction \cite[Th. 3]{Fli:1971}. (See \cite{Sak:2009} 
for details on $\K$-automata and Cartesian products.)

\begin{theo}\label{th: effective}
Given a well-defined $*$-rational expression, representing the series $a\in \ser{\fg}$, there exists an equivalent $*$-rational expression without simplification that is effectively computable.
\end{theo}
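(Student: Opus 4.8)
The plan is to analyze Fliess' construction of a $\K$-automaton for a rational series and to verify that each automaton-theoretic operation corresponds to an operation on $*$-rational expressions that can be controlled so as to avoid multiplicative simplifications. The starting point is Theorem \ref{th: Fliess}: since $a$ is $*$-rational in $\ser\fg$, the associated series of reduced words $S\in\monser{\bxx}$ is rational (as recalled, via \cite{Ben:1969}), hence recognizable by a finite $\K$-automaton $\mathcal{A}$, and by Fliess' theorem each such automaton yields a $*$-rational expression for $S$. The crucial point is that by construction $S$ is supported on $\red\XX$, so \emph{every} path in $\mathcal{A}$ carrying a nonzero coefficient spells a reduced word. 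The goal is to read off from $\mathcal{A}$ a $*$-rational expression whose component monomials are automatically reduced, and whose products and stars involve only reduced concatenations.

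First I would set up the translation carefully. For each state $p$ of $\mathcal{A}$, I would record the set of letters in $X\cup\overline X$ that can legally appear as the last letter of a path ending at $p$, and the letters that can legally begin a path leaving $p$; because $S$ is supported on reduced words, no edge $p\xrightarrow{\,\overline y\,}q$ with nonzero weight can follow an edge ending in $y$ (and vice versa). This ''reducedness-compatibility'' of the transition structure is the invariant I would track through the inductive decomposition. Then I would run Fliess' construction, which builds the expression by successively eliminating states and forming the classical Kleene-type combination of edge-labels using $(+,\times,(\arg)^*)$. At each elimination step, a new label is obtained as a concatenation and star of existing labels along paths through the eliminated state. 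The key claim, proved by induction on the elimination order, is that each intermediate label is an expression without simplification: sums of reduced monomials remain reduced (sums never introduce multiplicative simplification, as noted in Section \ref{sec: reduced-effectively}); products $\alpha\beta$ arising from concatenating consecutive edge-labels satisfy $\ell(\alpha)+\ell(\beta)=\ell(\alpha\beta)$ precisely because the paths they encode lie in $\red\XX$; and a star $c^*$ appears only for labels $c$ corresponding to closed paths at the eliminated state, for which $c\cdot c$ is again reduced (closed reduced paths concatenate reducibly), so by the criterion of Section \ref{sec: reduced-effectively}, $c^*$ is an expression without simplification.

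The main obstacle I anticipate is the star step: unlike sums and products, $(\arg)^*$ can \emph{a priori} create simplifications even when its argument is reduced, and Example \ref{ex: simplifications} shows that the two phenomena genuinely differ ($(\overline x y x)^*$ has simplifications while the equal expression $\overline x y^* x$ does not). The resolution is to confine stars to labels of closed paths at a single state; for such a label $c$, the condition ''$c\cdot c$ has no simplification'' must be verified directly, and where it fails one must first factor the closed-path label through a reduced prefix/suffix (mimicking the rewriting $\overline x y x \mapsto \overline x y^* x$) before applying the star. I would therefore arrange the state-elimination order, and insert such factorizations, so that the starred sublabels are always of the reducible type; effectivity follows since each such check and factorization is a finite computation on the finite automaton $\mathcal{A}$. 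Combining these steps produces an explicit $*$-rational expression for $a$ with no multiplicative simplifications, and the whole procedure is algorithmic because it operates on the finite data of $\mathcal{A}$ together with finitely many reducedness checks at each elimination.
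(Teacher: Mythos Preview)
Your proposal has a genuine gap at its foundation. You assert that because $S$ is supported on $\red\XX$, an automaton $\mathcal A$ recognizing $S$ has the property that ``no edge $p\xrightarrow{\,\overline y\,}q$ with nonzero weight can follow an edge ending in $y$,'' i.e., every nonzero-weight path spells a reduced word. This is false: what $\suppx{S}\subseteq\red\XX$ guarantees is only that for each non-reduced word $w$ the \emph{sum} of the weights of all accepting paths labeled $w$ vanishes. Individual paths through $\mathcal A$ can perfectly well carry non-reduced labels, with their contributions cancelling. Consequently your ``reducedness-compatibility invariant'' need not hold, and the induction on state elimination collapses: the intermediate labels produced by Kleene's procedure are sums over \emph{paths}, not over words, so cancelling paths with non-reduced labels reappear as simplifying monomials in the expression. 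Your proposed fix for the star case (ad hoc prefix/suffix factorizations mimicking $(\overline x y x)^*\leadsto\overline x y^* x$) does not address this underlying problem, and it is not clear it can be made systematic or terminating.

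The paper's proof works differently and in a way that repairs exactly this defect. It starts from an automaton for the given expression (whose paths are certainly not reduced in general) and performs two preprocessing steps before any expression is read off. First, \emph{Fliess' bypass construction}: for every configuration $o\xrightarrow{y}p\xrightarrow{x}q\xrightarrow{\overline x}r$ one adds a shortcut edge $o\xrightarrow{y}r$ with the appropriate weight, iterating until closure; this terminates because the underlying unlabelled multigraph is bounded. Second, one takes the \emph{Cartesian product} with an automaton $\mathcal X$ recognizing $\red\XX$. After these two steps every accepting path is forced to have a reduced label, and the automaton still recognizes $S$ (the Hadamard product with $\chi_{\red\XX}$). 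Only then does reversing McNaughton--Yamada produce an expression without simplification, automatically and with no special handling of the star case. Your state-elimination idea is essentially the final step of this argument, but it only goes through once the automaton has been preprocessed so that all paths---not merely all words in the support---are reduced.
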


\begin{proof}
Given such a series $a$, we: first, produce a ($\K$-)automaton that recognizes $a$; next, modify the automaton so that all of its paths are reduced words; and finally, construct a rational expression equivalent to $a$ from the augmented automaton.
\smallskip

\emph{(i):\, Fliess' construction (simplified case).} Let $a$ be a well-defined $*$-rational series in $\ser{\fg}$, recognized by an automaton $\mathcal A$ over $\bxx$, with $n$ nodes. Suppose that $\mathcal A$ contains the transitions $p\xrightarrow{x} q \xrightarrow{\overline x} r$ for some $x\in\bxx$. For each transition $o\xrightarrow{y} p$ in $\mathcal A$ (with $y\in\bxx$), add the transition $o\xrightarrow{y} r$. 
(Note that we allow $o$ to be the incoming edge to the automaton, with label $1$, in which case we add another incoming edge $o\xrightarrow{} r$.) 
Repeat for all such triples $p\xrightarrow{x} q \xrightarrow{\overline x} r$ originally occurring in $\mathcal A$. This builds an augmented automaton $\mathcal A'$. Then repeat for any triple $p\xrightarrow{x} q \xrightarrow{\overline x} r$ in $\mathcal A'$ not already appearing in $\mathcal A$ to build an automaton $\mathcal A''$. Continue in this way until all new reductions have been bypassed.
We claim that this process terminates in a finite (not necessarily deterministic) automaton $\tilde{\mathcal A}$. Indeed, at each point, one constructs a subgraph of the complete, labeled, directed graph on $n$ nodes and $2|X|$ labels. This is a finite graph. 
\smallskip

\emph{(ii):\, Fliess' construction (generic case).} We modify the above construction to accommodate well-defined $*$-rational series $a \in \ser{\fg}$ with more general coefficients. First, build a trim $\K$-automaton $\mathcal A$ over $\bxx$, with $n$ nodes, following its presentation as a rational series. 

Each triple $o\xrightarrow{\alpha y} p \xrightarrow{\beta x} q \xrightarrow{\gamma \overline x} r$ ($x,y\in\bxx$; $\alpha,\beta,\gamma\in \K$) will be handled in roughly the same way as above; more precisely, augment $\mathcal A$ to an automaton $\mathcal A'$ by adding the edge $o\xrightarrow{(\alpha\beta\gamma) y} r$. The concern is that the procedure $\mathcal A',\mathcal A'',\ldots$ never terminates, or rather, results in an infinite automaton $\mathcal A^{(\infty)}$. This cannot happen, as we now argue. 

Evidently, at each step in the procedure, $\mathcal A^{(l)} \mapsto \mathcal A^{(l+1)}$, no new nodes are added, and any added edge comes from choosing a letter from $\bxx$ and a coefficient from $\K$. Conclude that if $\mathcal A^{(\infty)}$ is an infinite graph, then there are nodes $o,r$ in $\mathcal A^{(\infty)}$ and a letter $x\in \bxx$ so that $o \xrightarrow{} r$ has an infinite set of labels $\{\xi_1x, \xi_2x, \dotsc\}$, with $\xi_i\in \K$. Now, since $\mathcal A$ was trim, the rational expression for $a$ contains a coefficient computation $(a,\w)$ that is an infinite sum, contradicting the assumption that the expression was well-defined.
 
So we may assume that $\mathcal A^{(\infty)}$ is in fact a finite $k$-automaton $\tilde{\mathcal A}$, constructible in finitely many steps. 
\smallskip

\emph{(iii):\, A rational expression without simplifications.}
It is well-known that the reduced words $\red\XX$ form a rational set,\footnote{Combine, e.g., Proposition 6.2 and Theorem 6.1 in \cite[Ch. II]{Sak:2009}.} and thus $\chi := \sum_{w\in \red\XX} w$ is recognizable by an automaton $\mathcal X$. In particular, each path accepted by $\mathcal X$ has a reduced word as its label. Form the Cartesian product $\tilde{\mathcal A} \times \mathcal X$. This automaton recognizes the Hadamard product $S\odot \chi =\sum_{w\in \red\XX} (S,w)w$. That is, $S$ itself.
Moreover, each path in $\tilde{\mathcal A} \times \mathcal X$ has the property that its label is a reduced word. 
%
It follows that $S$, and hence $a$, has a rational expression without simplification. (For this last point, simply reverse the McNaughton--Yamada algorithm, which builds an automata from a given rational expression.\footnote{See \cite{McNY:1960} or the proof of Theorem 5.1 in \cite[Ch. VII]{Eil:1974}.}) 
\end{proof}

\subsubsection{Rational series in $\mn\fg$}
We may also deduce a Malcev--Neumann version of the above result.

\begin{lemm}\label{th: well-ordered}
Given any rational Malcev--Neumann series $a$ over $\fg$, there exists a $\ast$-rational subset $L$ of $\fg$ satisfying 
$\suppx{a} \subseteq L$ and 
$L$ is well-ordered.
\end{lemm}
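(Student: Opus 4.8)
The plan is to induct on the construction of $a$ as a rational element of $\mn\fg$, that is, on the number of $(+,\times,{}^{-1})$ operations used to build $a$ from $\poly\fg$ (as in the Lewin realization recalled in Section \ref{sec: ratseries_group}). For each rational Malcev--Neumann series $b$ so obtained I will exhibit a $*$-rational subset $L_b\subseteq\fg$ that is well-ordered and satisfies $\suppx b\subseteq L_b$. The over-approximation is genuinely necessary: $\suppx a$ is well-ordered by hypothesis but need not be $*$-rational, so one must enlarge it to a rational set while preserving well-order. Throughout write $P=\{g\in\fg:g>1\}$ for the positive cone of the fixed order and use two standard facts about $\mn\fg$: the product of two well-ordered subsets of $\fg$ is again well-ordered (a consequence of the bi-invariance of $<$; see \cite[Ch.~2.4]{Coh:1995}), and if $S\subseteq P$ is well-ordered then so is the submonoid $S^*$ it generates.

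The base case and the additive and multiplicative steps are routine. If $a\in\poly\fg$, take $L_a=\suppx a$, a finite (hence well-ordered) $*$-rational set. If $a=b+c$, take $L_a=L_b\cup L_c$: a union of two well-ordered sets has no infinite descending chain, hence is well-ordered, and $*$-rational sets are closed under union; note that no truncation is needed here even though additive cancellation may make $\suppx a$ much smaller than $L_b\cup L_c$. If $a=bc$, take $L_a=L_bL_c$, which is $*$-rational and, by the first fact above, well-ordered, with $\suppx a\subseteq L_bL_c$.

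The inverse step is the crux. Given $b$ with leading coefficient $\oo b$ and leading term $\oo\w=\min\suppx b$, formula \eqref{eq:a_inverse} writes $b^{-1}=\oo b^{-1}\oo\w^{-1}q^*$, where $q=\sum_{\oo\w<\w}\widetilde{b_\w}\,\w\oo\w^{-1}$ has $\suppx q=\{\w\oo\w^{-1}:\w\in\suppx b,\ \w>\oo\w\}\subseteq P$ by right-invariance. Starting from the inductive set $L_b$, form $N=L_b\,\oo\w^{-1}$, a well-ordered $*$-rational set containing $\suppx q$ but possibly meeting $\{g\le 1\}$, and let $N_+:=N\cap P$. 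Then $\suppx q\subseteq N_+\subseteq P$, the set $N_+$ is well-ordered as a subset of the well-ordered set $N$, so $N_+^*$ is well-ordered by the second fact; take $L_{b^{-1}}:=\oo\w^{-1}N_+^*$. This contains $\oo\w^{-1}(\suppx q)^*\supseteq\suppx{b^{-1}}$ and is well-ordered. To see that $N_+$ is $*$-rational I will invoke Benois' theorem \cite{Ben:1969}, by which the $*$-rational (equivalently, rational) subsets of the free group form a Boolean algebra, together with the $*$-rationality of the positive cone $P$ for the fixed order (the order of Appendix \ref{sec: Lyndon} is arranged precisely so that $P$ is a rational set).

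The main obstacle is exactly this last point in the inverse step: restoring positivity of the star-base after over-approximation. Additive cancellation, both in the sum step and hidden inside the definition of $q$, forces $L_b$ to dip below the true leading term $\oo\w$, so the naive base $N=L_b\,\oo\w^{-1}$ carries elements $\le 1$ whose Kleene star is not well-ordered; only after intersecting with $P$ does one recover a well-ordered base. Keeping that intersection $*$-rational is what ties the argument to Benois' Boolean closure and to the rationality of the positive cone $P$ of the chosen order, and confirming the latter for the order of Appendix \ref{sec: Lyndon} is the step I expect to demand the most care.
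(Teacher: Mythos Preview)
Your inductive skeleton is exactly the paper's: build $L$ recursively along a rational presentation of $a$, invoking the three closure facts $(+)$, $(\times)$, $(*)$ for well-ordered subsets of an ordered group. You go further than the paper in isolating the real difficulty at the inverse step --- that over-approximation at earlier $+$ steps can push the star-base $N=L_b\,\oo\w^{-1}$ outside the positive cone, so that $N^*$ is no longer well-ordered. The paper's proof is terse on precisely this point (its fact $(*)$ should in any case read $\w>1$, not $\w\neq1$), and you are right to flag it.

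Your proposed repair, however, has a genuine gap. The positive cone $P$ of the Magnus ordering is \emph{not} a rational subset of $\fgx{X}$, so Benois' Boolean closure cannot be used to conclude that $N_+=N\cap P$ is $*$-rational. Concretely, with $X=\{x,y\}$, intersect $P$ with the rational set $R=(xy)^*\,\bar x^{\,*}\subseteq\red\XX$. For $\w=(xy)^m\bar x^{\,n}$ one has $\binom{\w}{x}=m-n$ and $\binom{\w}{y}=m$, so in the Magnus order $\w>1$ exactly when $m\geq1$ and $n\leq m$. Hence
\[
P\cap R \;=\; \bigl\{(xy)^m\bar x^{\,n}\ :\ m\geq1,\ 0\leq n\leq m\bigr\},
\]
and a routine pumping argument on the prefixes $(xy)^m$ shows this language is not rational; therefore neither is $P$. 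Your parenthetical that the order of Appendix~\ref{sec: Lyndon} ``is arranged precisely so that $P$ is a rational set'' is simply false --- nothing in that appendix asserts or implies it, and already the degree-one part of the Magnus comparison encodes non-regular exponent-sum conditions. So the step you yourself flagged as demanding ``the most care'' is the step that breaks; a different mechanism is needed to keep the star-base inside $P$ while remaining $*$-rational.
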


\begin{proof}
The following facts are well-known \cite[Lem. 13.2.9]{Pass:1985}, \cite[Ch. 2.4]{Coh:1995}.

\begin{enumerate}
\item[$(+)$]  If $L_1$ and $L_2$ are well-ordered, then $L_1 \cup L_2$ is well-ordered.
\item[$(\times)$] If $L_1$ and $L_2$ are well-ordered, then $L_1 L_2$ is also.
\item[$(\,\ast\,)$] If each $\w\in L$ satisfies $\w\neq1$ and $L$ is well-ordered, then  $L^*$ is also.
\end{enumerate}
Now construct $L$ recursively, following the rational presentation of $a$. 
\end{proof}

\begin{coro}\label{th: MN effective}
Every $*$-rational Malcev--Neumann series over the free group has an equivalent $*$-rational expression without simplification that is effectively computable.
\end{coro}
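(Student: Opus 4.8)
The goal is to prove Corollary \ref{th: MN effective}, which asserts that every $*$-rational Malcev--Neumann series over $\fg$ admits an equivalent $*$-rational expression without simplification that is effectively computable. The plan is to combine the effective algorithm of Theorem \ref{th: effective} with the well-orderedness control supplied by Lemma \ref{th: well-ordered}, observing that a Malcev--Neumann series is, in particular, a series in $\ser\fg$, so Theorem \ref{th: effective} already produces \emph{some} effectively computable $*$-rational expression without simplification. The only additional content of the Corollary is that this expression lives in $\mn\fg$ with respect to the fixed order $<$, i.e., that its support is well-ordered.

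First I would invoke Theorem \ref{th: effective} directly: given a $*$-rational Malcev--Neumann series $a$, regard it as an element of $\ser\fg$. Since $a$ is presented by a $*$-rational expression that is well-defined (the series converges in $\mn\fg$, hence \emph{a fortiori} the Cauchy products and stars involved are well-defined in $\ser\fg$), Theorem \ref{th: effective} yields an equivalent $*$-rational expression $E$ without simplification, and the construction there is effective. It remains to check that $E$ can be taken to represent an element of $\mn\fg$, i.e., that no step of the algorithm disturbs well-orderedness of the support.

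Next I would apply Lemma \ref{th: well-ordered}: every rational Malcev--Neumann series $a$ has $\suppx{a}\subseteq L$ for some $*$-rational well-ordered set $L\subseteq\fg$, built recursively along the rational presentation using the closure properties $(+)$, $(\times)$, $(\ast)$. The key observation is that the expression $E$ produced by Theorem \ref{th: effective} has support contained in $\suppx{a}$ (it represents the \emph{same} series $a$), hence contained in $L$. Since a subset of a well-ordered set is well-ordered, $\suppx{E}$ is well-ordered and $E$ therefore represents an element of $\mn\fg$. Thus $E$ is the desired expression: it is $*$-rational, has no simplifications, equals $a$, lies in $\mn\fg$, and is effectively computable.

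The main obstacle, and the point requiring the most care, is the compatibility of the two constructions rather than either construction individually. One must ensure that the order $<$ used to define $\mn\fg$ is the \emph{same} one used throughout, and that the hypotheses of Lemma \ref{th: well-ordered} (a genuinely rational Malcev--Neumann series, built from a rational presentation) match the input consumed by Theorem \ref{th: effective}; in particular the star operations appearing in the presentation must be applied only to series without constant term, so that $L^\ast$ is well-ordered via property $(\ast)$. Once these bookkeeping points are aligned, the Corollary follows immediately by noting that equality of series forces $\suppx{E}=\suppx{a}\subseteq L$, and well-orderedness passes to subsets; no further analytic estimate is needed.
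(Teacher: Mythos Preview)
There is a gap. You correctly observe that the series represented by $E$ equals $a$, so $\suppx{E}=\suppx{a}$ is well-ordered; but this is a tautology and does not use Lemma~\ref{th: well-ordered} at all. The content of the Corollary, as the paper's proof makes clear, is stronger: the output expression should be $*$-rational \emph{in the Malcev--Neumann sense}, meaning every subexpression---in particular every star $c^{\ast}$ occurring in $E$---lies in $\mn\fg$. For that one needs each such $c$ to have support consisting entirely of elements ${>}\,1$ in the group order. Theorem~\ref{th: effective} alone gives no such guarantee: the product automaton $\tilde{\mathcal A}\times\mathcal X$ may well have closed paths whose label $w$ satisfies $w<1$, and the McNaughton--Yamada extraction then produces stars whose argument is not supported above~$1$. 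Such a star is well-defined in $\ser\fg$ (lengths grow), but its support contains an infinite descending chain and so is not a Malcev--Neumann series. Your own phrase ``no step of the algorithm disturbs well-orderedness of the support'' names exactly this concern, yet your verification checks only the final support.

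The paper's remedy is to rerun the construction of Theorem~\ref{th: effective} with $\mathcal X$ replaced by a trim automaton $\mathcal X_L$ recognizing the well-ordered rational language $L\supseteq\suppx{a}$ furnished by Lemma~\ref{th: well-ordered}. Since $S\odot\chi_L=S$, nothing is lost; but now every closed path in $\mathcal X_L$---and hence in $\tilde{\mathcal A}\times\mathcal X_L$---carries a label $w>1$ (otherwise $L$ would contain $\suppx{uw^{\ast}v}$ for some $w\leq1$, contradicting well-orderedness). Consequently every star in the extracted expression is applied to a polynomial with support above~$1$, so the expression is valid step by step in $\mn\fg$. This is the genuine role of Lemma~\ref{th: well-ordered}, and it is precisely the step your argument omits.
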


\begin{proof}
Given a $*$-rational expression for $a\in\mn{\fg}$, let $L$ be as in Lemma \ref{th: well-ordered} and let $\chi_L$ be the corresponding rational series over $\red\XX$, well-ordered after its identification with $\fg$. Observe that in a trim automaton $\mathcal X$ for $\chi_L$, each closed path has a label $w>1$. Indeed, otherwise $L$ contains  $\suppx{u w^* v}$ for some $u,v \in\XX$, making $L$ not well-ordered. 
Now follow Parts (ii) and (iii) in the proof of Theorem \ref{th: effective}---replacing $\mathcal X$ by $\mathcal X_L$.
\end{proof}

\begin{rema}
Note that in the result above we assume that an expression for $a\in \mn{\fg}$ has been given with inverses computed according to \eqref{eq:a_inverse}. In particular, we needn't determine the minimal element in the support of any subexpression within $a$. (However, such decisions may also be made effectively, as we show in Proposition \ref{th: min-supp}.)
\end{rema}

\begin{exam}\label{ex: effective}
Suppose $x<yx\in\fg$, so that $(\overline xyx)^*\overline x \in\mn\fg$. Then
\begin{gather}
\left(\overline xyx\right)^*\overline x \ \   
\leadsto 
\!\!\!\!\raisebox{-28pt}{
	\psset{unit=3.25pt} 
    \begin{pspicture}(35, 18)(0,0)
    \gasset{Nw=5,Nh=5,Nmr=2.5,curvedepth=0,loopdiam=6}
    \thinlines
    \node[Nmarks=i,iangle=180](A0)(15,3){{\small\sf0}}
    \node(A1)(25,15){{\small\sf1}}
    \node(A2)(5,15){{\small\sf2}}
    \node(A3)(32,3){{\small\sf3}}
    \rmark[rdist=.4](A3)
    \drawedge(A0,A1){$\overline x$}
    \drawedge[ELside=r](A1,A2){$y$}
    \drawedge(A2,A0){$x$}
    \drawedge(A0,A3){$\overline  x$}
    \end{pspicture}
}
\ 
\leadsto 
\!\!\!\raisebox{-28pt}{
	\psset{unit=3.25pt} 
    \begin{pspicture}(35, 24)(0,-1)
    \gasset{Nw=5,Nh=5,Nmr=2.5,curvedepth=0,loopdiam=6}
    \thinlines
    \node[Nmarks=i,iangle=180](A0)(15,3){{\small\sf0}}
    \node(A1)(25,15){{\small\sf1}}
    \node(A2)(5,15){{\small\sf2}}
    \node(A3)(33,3){{\small\sf3}}
    \rmark[rdist=.4](A3)
    \drawedge(A0,A1){$\overline x$}
    \drawedge[ELside=r](A1,A2){$y$}
    \drawedge(A2,A0){$x$}
    \drawedge(A0,A3){$\overline  x$}
    \drawloop[loopangle=55,dash={1}0,ELpos=70](A1){$y$}
    \drawedge[dash={1}0](A1,A3){$y$}
    \end{pspicture}
}
\leadsto \ \  
\overline x y^* .
\end{gather}
Here, the dashed lines indicate those edges added during the Fliess construction. Explicit computation of the Hadamard product with $\mathcal X_L$ is suppressed.
\end{exam}

\subsection{The word problem in the free field}\label{sec: word problem} 
By the \demph{word problem} in the free field, we mean the following: 
\begin{center}
\emph{Given an expression for some $a \in \ff{X}$, determine if $a=0$.} 
\end{center}
This problem was first solved in \cite{Coh:1973a}, using Cohn's theory of full matrices. This solution was revisited in \cite{CR:1999}, where it was reformulated as an ideal-membership problem in a commutative ring (a solution is then possible using Gr{\"o}bner bases and Buchberger's algorithm).
Here, we describe an alternative solution to the word problem, debarking from the Malcev--Neumann realization of $\ff{X}$. 

\subsubsection{Description of the algorithm}

We need the following standard result from the theory of rational languages.

\begin{prop}\label{th: subwords}
Given a rational series $S\in \monser{A}$ of (Hankel) rank $n$, each word of length at least $n$ in the support has a subword which is also in the support. 
\end{prop}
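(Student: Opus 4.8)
The plan is to translate the Hankel rank into a statement about the infinite Hankel matrix $H = \big((S,uv)\big)_{u,v\in A^*}$, whose rows are indexed by prefixes and columns by suffixes, and then run a pigeonhole argument along the prefixes of the given word. First I would record that $\rnk H = n$: by definition the Hankel rank is the rank of the operator $\circc S$, whose image is spanned by the translates $\{u \circc S\}$ with $(u \circc S, v) = (S, vu)$; these are precisely the columns of $H$, so the column rank of $H$ is $n$, and hence so is its row rank.

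Now fix $w = a_1 \cdots a_m \in \suppx{S}$ with $m = \ell(w) \geq n$, and write $p_i = a_1 \cdots a_i$ and $s_i = a_{i+1}\cdots a_m$ for the complementary prefix/suffix pair, so that $w = p_i s_i$. I would consider the $n+1$ rows of $H$ indexed by $p_0, p_1, \dots, p_n$ (these are defined since $m \ge n$). As they lie in a row space of dimension $n$, they are linearly dependent: there are scalars $c_0, \dots, c_n \in \K$, not all zero, with $\sum_{i=0}^n c_i\,(S, p_i v) = 0$ for every $v \in A^*$. Let $j$ be the largest index with $c_j \neq 0$.

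The key step is then to specialize $v = s_j$. Since $c_i = 0$ for $i > j$, the relation collapses to $c_j\,(S, p_j s_j) = -\sum_{i<j} c_i\,(S, p_i s_j)$, and the left-hand side equals $c_j\,(S, w) \neq 0$ because $c_j\neq 0$ and $w$ is in the support. Hence $(S, p_i s_j) \neq 0$ for some $i < j$; that is, $p_i s_j \in \suppx{S}$. Finally I would observe that $p_i s_j = a_1\cdots a_i\, a_{j+1}\cdots a_m$ is obtained from $w$ by deleting the factor $a_{i+1}\cdots a_j$, which is nonempty since $i<j$, so $p_i s_j$ is a genuine (strictly shorter) subword of $w$ lying in the support, as required.

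I expect no serious obstacle beyond bookkeeping. The two points that need care are the identification $\rnk H = n$ through the row/column symmetry of matrix rank, so that one may argue with prefixes even though the Hankel rank was originally defined via the suffix translates $\circc S$, and the choice of $j$ as the \emph{maximal} nonzero index, which is exactly what forces the excised factor $a_{i+1}\cdots a_j$ to be nonempty and the surviving word to be properly shorter than $w$.
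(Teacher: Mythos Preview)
Your proof is correct and follows essentially the same pigeonhole argument as the paper: both take the $n{+}1$ prefixes $p_0,\dots,p_n$ of $w$, use that the associated ``row vectors'' live in an $n$-dimensional space, pick the top index of a nontrivial dependence, and evaluate at the complementary suffix to extract a shorter subword in the support. The only cosmetic difference is that the paper phrases the linear dependence via a minimal representation $(\lambda,\mu,\rho)$ (so the vectors are $\lambda\mu(p_i)\in\K^{1\times n}$), whereas you work directly with rows of the Hankel matrix; these are equivalent once one knows the Hankel rank equals the minimal dimension of a representation.
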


\begin{proof}
Let $w\in\suppx{S}$ be of length at least $n$, and let $p_0,\dotsc,p_n$ be any prefixes of $w$ of strictly increasing length. Given a representation of the series of dimension $n$, $(\lambda, \mu, \rho)$ say, consider the vectors $\lambda \mu(p_0),\dotsc, \lambda \mu(p_n)$. Note that for some $i$, $\lambda \mu(p_i)$ is a linear combination of the vectors with smaller index. Let $s_i$ be the suffix of $w$ corresponding to $p_i$ and multiply the linear combination on the right by $\mu(s_i)$. Deduce that $\lambda \mu(w)$ is a linear combination of the vectors $\lambda \mu(p_j s_i)$ for $j<i$.

Finally, $\lambda \mu(w) \rho$ is nonzero (since $w\in\suppx{S}$), and hence so is one of the numbers $\lambda \mu(p_j s_i) \rho$. Conclude that the subword $p_j s_i$ of $w$ is also in the support of $S$.
\end{proof}

Let $S$ be a rational series over the doubled alphabet $\bxx$, with support included in the set of reduced words, and with well-ordered support (upon identifying the free group with the set of reduced words). We would like a method of determining $\hbox{\textsf{min}}\suppx{S}$. Our algorithm rests on the following key lemma.

\begin{lemm}\label{th: big bound}
There exists a bound $N$, depending on the rank $n$ of the series $S$ and on the cardinality of the alphabet, such that for each word $w$ in $\suppx{S}$ of length at least $N$, $w$ has a factor of the form $u = u_1\dotsb u_n$ with each $u_i  >1$ in the group ordering.
\end{lemm}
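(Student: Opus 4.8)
The plan is to read the desired factor off an \emph{increasing} subsequence of the prefixes of $w$, apply the Erd\H{o}s--Szekeres theorem, and reduce the whole statement to a bound on the length of \emph{decreasing} subsequences, which will come from well-orderedness.

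First I would set up a dictionary between factors and prefixes. Write $w = x_1\cdots x_m$ in reduced form and let $g_i := x_1\cdots x_i$ (with $g_0 = 1$) be its prefixes, viewed as elements of $\fg$; since $w$ is reduced these are $m+1$ distinct group elements. For $a<b$ the factor $x_{a+1}\cdots x_b$ equals $g_a\inv g_b$, so, the order being bi-invariant, $x_{a+1}\cdots x_b > 1$ if and only if $g_a < g_b$. Thus a factor $u = u_1\cdots u_n$ with every $u_i>1$ is exactly what one obtains from a strictly increasing subsequence $g_{a_0}<\cdots<g_{a_n}$ of $(g_0,\dots,g_m)$, upon setting $u_i := x_{a_{i-1}+1}\cdots x_{a_i}$. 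It therefore suffices to guarantee an increasing subsequence of length $n+1$ among the prefixes. By Erd\H{o}s--Szekeres, any sequence of more than $n^2$ distinct elements contains a monotone subsequence of length $n+1$; so once I show that the prefix sequence of every $w\in\suppx{S}$ has \emph{no} decreasing subsequence of length $n+1$, the choice $N = n^2$ will force an \emph{increasing} subsequence of length $n+1$ whenever $\ell(w)\ge N$ (so that the prefix sequence has more than $n^2$ terms).

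The heart of the argument is the bound on decreasing subsequences, and this is where well-orderedness enters. Suppose, for contradiction, that some $w\in\suppx{S}$ admits a strictly decreasing prefix subsequence $g_{b_0}>g_{b_1}>\cdots>g_{b_n}$ of length $n+1$. Writing $w = g_{b_i}s_i$, I would form the right translates $H_i := s_i\circc S \in\monser{\bxx}$. By Fliess' theorem (Theorem~\ref{th: Fliess}) the right translates of $S$ span an $n$-dimensional space, so $H_0,\dots,H_n$ are linearly dependent; let $j$ be least with $H_j = \sum_{i<j} c_i H_i$. Evaluating this identity of series at the word $g_{b_j}$ and using $(s_i\circc S,\, g_{b_j}) = (S,\, g_{b_j}s_i)$ gives $(S,w) = \sum_{i<j} c_i\,(S,\, g_{b_j}s_i)$, whence $(S,\, g_{b_j}s_{i^*})\neq 0$ for some $i^*<j$. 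Since $\suppx{S}$ consists of reduced words, the concatenation $w' := g_{b_j}s_{i^*}$ is automatically reduced and lies in $\suppx{S}$; as a group element $w' = g_{b_{i^*}}\,t^2\,s_j$, where $t := g_{b_{i^*}}\inv g_{b_j}<1$. In other words $w'$ is $w$ with one extra copy of the negative block $t$ inserted, so $w'<w$, and tracing the partial products shows $w'$ carries a decreasing prefix subsequence of length at least $n+2$. Repeating the construction produces an infinite strictly decreasing sequence in $\suppx{S}$, contradicting well-orderedness. This forces all such decreasing subsequences to have length at most $n$, completing the chain of implications.

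I expect the third paragraph to be the main obstacle. The subtle points are, first, that one must extract a strictly \emph{smaller} element of the support (not merely a shorter one): Proposition~\ref{th: subwords} supplies the complementary move of \emph{deleting} a block, but deletion of a negative block \emph{raises} the value, so it is the \emph{insertion} furnished by the Hankel dependence among the $H_i$ that drives the descent. Second, one must check that the inserted word stays reduced---this is where the hypothesis that $\suppx{S}$ consists of reduced words does real work, since any non-reduced concatenation $g_{b_j}s_i$ contributes a zero coefficient and hence cannot be the surviving term $i^*$. Third, one must verify that the inserted copy genuinely lengthens the decreasing subsequence, so that the descent may be iterated indefinitely. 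With these in hand, assembling the Erd\H{o}s--Szekeres count yields an explicit bound $N$ depending only on the rank $n$.
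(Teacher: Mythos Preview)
Your approach is correct and genuinely different from the paper's. The paper proves the lemma via Jacob's machinery on pseudo-regular matrices: Proposition~\ref{th: Jacob N2} supplies, for long enough $w\in\suppx{S}$, $n$ consecutive factors $u_1,\dots,u_n$ with each $\mu(u_i)$ pseudo-regular, and then one argues (via Proposition~\ref{th: Jacob N1}) that pseudo-regularity of $\mu(u)$ together with well-orderedness forces $u>1$. Your route is more elementary---only Erd\H{o}s--Szekeres and one linear-algebra step on right translates---and it yields the explicit bound $N=n^2$, which is dramatically smaller than Jacob's bounds (the paper itself remarks that those are ``extremely large'') and, notably, independent of the alphabet size. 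The three points you flag as delicate all check out: the support hypothesis guarantees $g_{b_j}s_{i^*}$ is reduced; bi-invariance and $t=g_{b_{i^*}}^{-1}g_{b_j}<1$ give $w'=g_{b_{i^*}}t^2s_j<w$; and the prefixes $g_{b_0},\dots,g_{b_j}$ of $w'$ followed by $g_{b_j}g_{b_{i^*}}^{-1}g_{b_l}$ for $l=i^*{+}1,\dots,n$ form a decreasing prefix subsequence of length $(j{+}1)+(n{-}i^*)\geq n{+}2$, so the descent iterates. What the paper's approach buys is a structural statement about the factors (they come with pseudo-regular images under a fixed representation), which may be useful elsewhere; what your approach buys is a short, self-contained argument with a sharp quadratic bound.
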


\begin{prop}\label{th: min-supp}
Given $S$ as above, $\mathsf{min}\suppx{S}$ is effectively computable.
\end{prop}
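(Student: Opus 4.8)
The plan is to use Lemma \ref{th: big bound} to reduce the computation of $\mathsf{min}\suppx{S}$ to a finite search. The key obstruction to naively enumerating the support is that the support may be infinite; the point of Lemma \ref{th: big bound} is to show that the minimum cannot be ``too long.'' Recall that $S$ has well-ordered support (after identifying $\fg$ with $\red\XX$), so $\mathsf{min}\suppx{S}$ exists; the task is to bound its length effectively and then certify the minimum by a finite comparison.

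First I would invoke Lemma \ref{th: big bound} to produce the bound $N$, depending on the rank $n$ of $S$ and on $|X|$. The crucial claim is that \emph{the minimal element $m=\mathsf{min}\suppx{S}$ has length $\ell(m)<N$.} Suppose to the contrary that $\ell(m)\geq N$. Then by Lemma \ref{th: big bound}, $m$ has a factor $u=u_1\dotsb u_n$ with each $u_i>1$ in the group ordering. Writing $m \doteq p\,u\,s$ as a reduced factorization (so $m=pus$ in $\fg$), I would compare $m$ with the shorter element obtained by deleting this factor, i.e.\ with $ps$ (or with an intermediate element $p u_1\dotsb u_{n-1} s$). Since each $u_i>1$ and the order $<$ is compatible with the group structure, left- and right-multiplication by group elements is order-preserving, so $u>1$ forces $pus > ps$. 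The difficulty is that $ps$ need not itself lie in $\suppx{S}$, so this alone does not immediately contradict minimality of $m$.

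\textbf{The main obstacle} is precisely bridging from ``$m$ has a deletable factor making it larger than a shorter word'' to ``there is a strictly smaller \emph{element of the support}.'' Here I would lean on Proposition \ref{th: subwords} together with the Hankel-rank representation $(\lambda,\mu,\rho)$ of $S$: the proof of that proposition shows that when a word $w\in\suppx{S}$ is long, one can excise a factor and obtain a genuine subword $w'\in\suppx{S}$. I would arrange the bound $N$ in Lemma \ref{th: big bound} so that the excised factor can be taken to be one of the $u_i>1$ (or the whole block $u$), guaranteeing simultaneously that $w'\in\suppx{S}$ \emph{and} $w'<w$ in the group ordering. This yields $w'\in\suppx{S}$ with $w'<m$, contradicting minimality. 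Hence $\ell(m)<N$.

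Once the length bound $\ell(m)<N$ is established, the algorithm is immediate and effective: enumerate the finitely many reduced words of length less than $N$, test membership of each in $\suppx{S}$ (this is decidable, e.g.\ by computing the coefficient $(S,w)=\lambda\mu(w)\rho$ from the representation and checking it is nonzero), collect those that lie in the support, and return the $<$-minimum of this finite nonempty set. Since $<$ is an effectively computable total order on $\fg$ (see Appendix \ref{sec: Lyndon}) and coefficient evaluation is a finite matrix computation, every step is effective, and $\mathsf{min}\suppx{S}$ is thereby computed.
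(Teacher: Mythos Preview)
Your overall plan is right, and matches the paper's: use Lemma~\ref{th: big bound} to show that any word in $\suppx{S}$ of length $\geq N$ admits a strictly smaller element of the support, so $\mathsf{min}\suppx{S}$ has length $<N$ and a finite search finishes. You also correctly flag the ``main obstacle'': the shortened word $ps$ need not lie in $\suppx{S}$, so one must excise a factor that is simultaneously $>1$ \emph{and} leaves a word in the support.

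But your resolution of that obstacle is where the gap lies. You write that you ``would arrange the bound $N$ in Lemma~\ref{th: big bound}'' so that the excised factor can be taken to be one of the $u_i$. That is not the mechanism; the bound $N$ is already fixed, and Proposition~\ref{th: subwords} gives you no control over \emph{which} factor gets excised---unless you exploit the freedom in its proof. The point is that in the proof of Proposition~\ref{th: subwords} one may choose \emph{any} $n{+}1$ prefixes $p_0,\dotsc,p_n$ of $w$ (where $n$ is the rank), and the conclusion is that some $p_j s_i$ with $j<i$ lies in $\suppx{S}$. The reason Lemma~\ref{th: big bound} produces exactly $n$ consecutive factors $u_1,\dotsc,u_n$ is precisely so that, writing $w\doteq w_0 u_1\dotsb u_n w_n$, you can take the $n{+}1$ prefixes to be $p_j := w_0 u_1\dotsb u_j$ for $j=0,\dotsc,n$. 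Then the subword $p_j s_i\in\suppx{S}$ produced by Proposition~\ref{th: subwords} is obtained from $w$ by deleting the block $u_{j+1}\dotsb u_i$, a product of elements each $>1$; compatibility of $<$ with the group law gives $p_j s_i < w$. This alignment of the rank $n$ with the number of factors $u_i$ is the crux, and it is what your proposal leaves unspecified.

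Once you make that connection explicit, the rest of your argument (length bound, enumeration of reduced words of length $<N$, coefficient check via $\lambda\mu(w)\rho$, comparison under~$<$) is correct and essentially the paper's.
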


\begin{proof}
We claim that for each word $w$ in the support of $S$ of length at least $N$ (from Lemma \ref{th: big bound}), there exists a subword that is smaller for the group ordering and still in the support. This makes the problem finite: let $\mathcal N$ denote the set of words $w$ of length at most $N$, together with all of their subwords; restrict to the set $\mathcal N \cap \suppx{S}$, and compare these elements pairwise to find the minimum.
(See Appendix \ref{sec: Lyndon} for one way to make these comparisons.) 

To see the claim, suppose $w \doteq w_0u_1\dotsb u_nw_n$, with the $u_i>1$ as in Lemma \ref{th: big bound} and the $w_i$ possibly empty. Note that deleting any subset $I$ of the $u_i$ from $w$ results in a subword of $w$ that is smaller for the group ordering. (Indeed, $a<b \implies ua v<ub v$ for all $a,b,u,v\in \fg$.) Finally, put $p_j := w_0u_1\dotsb u_j$, and consider the subwords $p_js_i$ of $w$ from the proof of Proposition \ref{th: subwords}. One of these will belong to $\suppx{S}$.
\end{proof}

Thus determining whether $\suppx{S}=\emptyset$ is a finite problem, as we now indicate.

\begin{theo}[The Word Problem]\label{th: word problem} 
Given  $a\in \ff{X}$, the problem of determining whether $a=0$ has an effective solution.
\end{theo}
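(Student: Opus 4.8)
The plan is to decide $a=0$ by turning it into an emptiness question for the support of a recognizable series over the doubled alphabet $\bxx$, and then observing that emptiness is already witnessed by a word of bounded length. First I would invoke Lewin's realization (Section \ref{sec: free field}): since $\ff{X}$ is the rational closure of $\poly{\fg}$ in $\mn{\fg}$, the given expression presents a well-defined $*$-rational element of $\mn{\fg}$, and $a=0$ in $\ff{X}$ if and only if the corresponding Malcev--Neumann series vanishes, i.e. $\suppx{a}=\emptyset$.

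The subtlety — and what I expect to be the main obstacle — is that a naive expansion of the expression into a series over $\XX$ need not reflect vanishing in the free group: words such as $x\sbar{x}$ and the empty word are distinct in $\XX$ though equal in $\fg$, so genuine cancellations can be hidden and the free-monoid series can be nonzero even when $a=0$. To neutralize this, I would apply Corollary \ref{th: MN effective} (equivalently, the $\K$-automaton produced in Theorem \ref{th: effective}) to rewrite the expression, \emph{effectively}, as a $*$-rational expression without simplification. This yields a rational series $S\in\monser{\bxx}$ whose support consists of reduced words and satisfies $\suppx{S}=\suppx{a}$ under the identification of $\fg$ with $\red\XX$; in particular $a=0$ if and only if $\suppx{S}=\emptyset$. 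From this data I would extract a linear representation $(\lambda,\mu,\rho)$ of $S$ of some dimension $n$, as in Section \ref{sec: recog}.

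Next I would reduce emptiness of $\suppx{S}$ to a finite check. By Proposition \ref{th: subwords}, any word of $\suppx{S}$ of length at least $n$ contains a strictly shorter word of $\suppx{S}$ as a subword; hence a shortest element of a \emph{nonempty} $\suppx{S}$ must have length less than $n$. Therefore $\suppx{S}=\emptyset$ if and only if $(S,w)=\lambda\mu(w)\rho=0$ for every $w\in\XX$ with $\ell(w)<n$. (Alternatively, one may run the effective computation of $\mathsf{min}\,\suppx{S}$ from Proposition \ref{th: min-supp}, using the bound $N$ of Lemma \ref{th: big bound}, and declare $a=0$ precisely when that procedure returns no element.)

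Finally, there are only finitely many words $w$ with $\ell(w)<n$, and each coefficient $(S,w)$ is computed by a single matrix product $\lambda\mu(w)\rho$. Testing these finitely many coefficients for vanishing gives an effective decision procedure: $a=0$ exactly when all of them vanish. Assembling the three stages — the effective passage to an expression without simplification, the length bound on a shortest support word, and the finite coefficient computation — produces the desired algorithm. The only genuinely delicate point is the first stage, where Corollary \ref{th: MN effective} is precisely what guarantees that the free-monoid series faithfully records vanishing in $\fg$; once $S$ is supported on reduced words, the remaining steps are routine finite computations.
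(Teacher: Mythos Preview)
There is a genuine gap in your first stage. The datum is a rational expression in $\ff{X}$, i.e.\ one built from $+,\ \times,\ (\cdot)^{-1}$; it is \emph{not} yet a $*$-rational expression over $\bxx$. Corollary~\ref{th: MN effective} (and Theorem~\ref{th: effective}) take a $*$-rational expression as input---see the Remark immediately following Corollary~\ref{th: MN effective}, which explicitly assumes inverses have already been computed via~\eqref{eq:a_inverse}---so you cannot apply them directly to the given expression. The passage from $(\cdot)^{-1}$ to $(\cdot)^*$ is formula~\eqref{eq:a_inverse}, and that formula requires, for each subexpression $b$ to be inverted, the \emph{minimum} $\oo\w$ of $\suppx{b}$ together with its coefficient. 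Making this effective is precisely the content of Proposition~\ref{th: min-supp}, and it must be invoked recursively, bottom-up through the subexpression tree, \emph{before} Corollary~\ref{th: MN effective} can be used at all. You cite Proposition~\ref{th: min-supp} only as an optional alternative to the final emptiness test; in fact it is indispensable at the very first step, and the paper's proof calls it out exactly there.

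Once that gap is closed, the remainder of your argument is fine. Your emptiness test via Proposition~\ref{th: subwords}---a shortest word in a nonempty $\suppx{S}$ has length less than the Hankel rank $n$, so check all $w$ with $\ell(w)<n$---is correct and somewhat cleaner than the paper's route, which invokes Sch\"utzenberger's reduction algorithm together with the much larger bound $N$ of Lemma~\ref{th: big bound}. Both approaches decide the same finite problem; yours just uses the smaller, more elementary bound.
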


\begin{proof}
Given a rational expression for $a$, construct an equivalent $*$-rational expression $S$ over $S\in\red\XX$ according to \eqref{eq:a_inverse}. This may be done effectively, after Proposition \ref{th: min-supp}. Next, replace this $S$ by a rational series without simplification, as in Corollary \ref{th: MN effective}. 
Now the word problem is reduced to the well-known problem of determining if a rational series in $\monser{\bxx}$ is zero; this is solved using Sch\"utzenberger's reduction algorithm (see \cite[Sec. 2.3]{BerReu:2011}).

Keep at hand the representation $(\lambda,\mu,\rho)$ for $S$ developed there. Finally, pick each word $w$ of length at most $N$ (from Lemma \ref{th: big bound}), and determine if any of its subwords belong to $\suppx{S}$. (Recall $(S,u)$ is simply computed as $\lambda\mu(u)\rho$.) This completes the proof.
\end{proof}

\subsubsection{Proof of key lemma}
After some preparatory results of Jacob \cite{Jac:1980,Jac:1978}, we will be ready to prove  Lemma \ref{th: big bound}. 

\begin{prop}[Jacob, {\cite[Th. 3.5.1]{BerReu:2011}}]\label{th: Jacob N1}
Given a rational series $S\in \monser{A}$, there exists an integer $N_1$ such that for any word $w \in \suppx{S}$, and for any factorization $w = w_0uw_1$ satisfying $|u| \geq N_1$, there exists a factorization $u = pvs$ such that $\suppx{S} \cap \suppx{w_0pv^*sw_1}$ is infinite.
\end{prop}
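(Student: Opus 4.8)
The plan is to pass to a finite linear representation of $S$ and reduce the statement to a dichotomy for a single linearly recurrent sequence. Fix a representation $(\lambda,\mu,\rho)$ of dimension $n$ of the rational series $S$ (Sch\"utzenberger's theorem guarantees one exists, and by Fliess we may take $n$ to be the Hankel rank). Given a factorization $u=pvs$, set $L=\lambda\mu(w_0p)\in\K^{1\times n}$, $M=\mu(v)\in\K^{n\times n}$, and $R=\mu(sw_1)\rho\in\K^{n\times 1}$. Then $(S,\,w_0pv^ksw_1)=LM^kR=:c_k$, so that
\[
\suppx{S}\cap\suppx{w_0pv^*sw_1}\ \text{is infinite}\iff c_k\neq 0\ \text{for infinitely many }k.
\]
(Equivalently, by Theorem \ref{th: Hadamard} the Hadamard product of $S$ with the characteristic series of the rational set $w_0pv^*sw_1$ is rational, and we are asking whether its support is infinite.) The crucial free input is that $c_1=(S,w)\neq 0$, since $w=w_0pvsw_1\in\suppx{S}$. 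Thus the task becomes: choose the factorization $u=pvs$ so that the nonzero value $c_1$ forces infinitely many nonzero $c_k$.

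The engine for this is the reversible-recurrence dichotomy. By Cayley--Hamilton the sequence $(c_k)$ satisfies a linear recurrence of order at most $n$; when $M$ is \emph{invertible}, its minimal polynomial has nonzero constant term, so the recurrence can be run backwards and $(c_k)$ extends to a two-sided recurrent sequence. Consequently ``eventually zero'' forces ``identically zero,'' and since $c_1\neq 0$ we would get $c_k\neq 0$ for infinitely many $k$. For a general loop matrix $M$ we first apply the Fitting decomposition $\K^n=\ker(M^n)\oplus\img(M^n)$ into the nilpotent and invertible parts of $M$; for $k\geq n$ the sequence $c_k$ sees only the invertible part, where the dichotomy applies. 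The genuine danger is the \emph{transient} case: the single occurrence of $w$ may be carried entirely by the nilpotent part (so $c_k=0$ for all $k\geq n$ while $c_1\neq 0$), and then this particular loop does not pump at all.

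This is where the hypothesis $|u|\geq N_1$ is used, and it is the main obstacle of the proof. The idea is to locate a loop deep inside the ``stable'' region of $u$, where no transient behaviour can occur. Consider the prefix matrices $\mu(p_0),\mu(p_1),\dots,\mu(p_{|u|})$ obtained by reading $u$ letter by letter; their ranks are non-increasing, hence stabilize after at most $n$ drops at some minimal value $r$. On the resulting plateau the relevant column (and, symmetrically, row) spaces stabilize, so every loop $v$ with both endpoints inside the plateau acts as an isomorphism of a canonical $r$-dimensional subspace. Choosing $N_1$ large enough, depending only on $n$ (and hence on $\rnk S$ and on $|A|$), guarantees a plateau long enough to contain, by a pigeonhole over the finitely many possible stabilized reachable/co-reachable subspaces, a loop $v=p^{-1}\!\cdots$ whose invertible part is \emph{seen} by both $L$ and $R$. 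For such a loop the occurrence of $w$ is not purely transient, so $c_1\neq 0$ upgrades to infinitely many nonzero $c_k$ via the dichotomy above. The hard part is exactly this separation: cleanly extracting from a long $u$ a loop whose invertible part carries the pairing between $L$ and $R$, which requires the careful left/right bookkeeping of the reachable and co-reachable subspaces of the representation and the verification that the transient/nilpotent cancellation has been excluded.
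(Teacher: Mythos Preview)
The paper does not prove this proposition; it is quoted as a theorem of Jacob (via \cite{BerReu:2011}) and used as a black box in the proof of Lemma~\ref{th: big bound}. Your reduction to the sequence $c_k=LM^kR$ with $c_1=(S,w)\neq0$ is correct, and so is the dichotomy: once $M=\mu(v)$ is chosen so that its Fitting-invertible part carries the pairing, reversibility of the recurrence upgrades $c_1\neq0$ to infinitely many nonzero $c_k$. The gap is precisely where you locate it yourself --- producing such a $v$ --- and your proposed mechanism does not close it. The phrase ``a pigeonhole over the finitely many possible stabilized reachable/co-reachable subspaces'' is the problem: over an infinite field there is no finite family of subspaces available to pigeonhole over. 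It is true that the column spaces $\img\mu(p_i)$ of successive prefixes are nested and hence constant on a rank plateau, say equal to $V$; but $V$ is not $\mu(v)$-invariant for a factor $v$ on the plateau (from $\mu(p_i)\mu(v)=\mu(p_j)$ one gets only $\mu(p_i)(\img\mu(v))=V$, which says nothing about $\mu(v)|_V$). So the plateau alone does not rule out the purely transient case you correctly flagged as the danger, and your last paragraph remains a description of what must be done rather than an argument that does it.

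The proof in \cite{BerReu:2011} avoids any subspace pigeonhole. By an iterated rank argument (ranks take values in $\{0,\dots,n\}$, and this bounds the number of refinement steps and hence yields $N_1$) one extracts a factor $v$ of $u$ with $\mu(v)$ \emph{pseudo-regular}: lying in a subgroup of the multiplicative monoid $\K^{n\times n}$, equivalently $\rnk\mu(v)^2=\rnk\mu(v)$, equivalently $\K^n=\ker\mu(v)\oplus\img\mu(v)$. For such $M=\mu(v)$ the Fitting decomposition is immediate, since $MR\in\img M=\img M^n$ already lies in the invertible part; the transient obstruction disappears and your reversibility argument finishes at once. (The paper itself invokes exactly this in the proof of Lemma~\ref{th: big bound}: ``if $\mu(u)$ is pseudo-regular and $v'uv''\in\suppx{S}$, then $\suppx{v'u^*v''}\cap\suppx{S}$ is infinite.'') What is missing from your proposal, then, is Jacob's construction of a pseudo-regular factor; the surrounding framework is sound.
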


Recall that an $n\times n$ matrix over $\K$ is called \demph{pseudo-regular} if it belongs to a subgroup of the multiplicative semigroup of matrices. Equivalently, if it is similar to a matrix of the form $\begin{pmatrix} g & 0 \\ 0  & 0\end{pmatrix}$ with $g\in \mathrm{GL}_{n'}(\K)$ and $0\leq n'\leq n$.

\begin{prop}[Jacob, {\cite[Exer. 3.5.1]{BerReu:2011}}]\label{th: Jacob N2}
Let $(\lambda,\mu,\rho)$ be a representation of a rational series $S\in\monser{A}$. For all integers $p$, there is a bound $N_2$, depending on $p$, the rank $n$ of $S$, and the cardinality of $A$, so that if $w$ is any word in $\suppx{S}$ of length at least $N_2$, then $w$ has consecutive factors $u_1,\dotsc,u_p$ with all $\mu(u_i)$ pseudo-regular (and having a common kernel and image).
\end{prop}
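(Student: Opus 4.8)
The plan is to treat this as a purely combinatorial statement about the matrix monoid and to discard the hypothesis $w\in\suppx{S}$, which plays no role: I will produce the factorization for \emph{every} sufficiently long word. I will use two standard facts. First, a matrix $m\in\K^{n\times n}$ is pseudo-regular iff $\mathrm{rank}(m)=\mathrm{rank}(m^2)$, equivalently $\mathrm{im}(m)\oplus\ker(m)=\K^n$; and a family of matrices has a common image and a common kernel exactly when its members lie in a single $\mathcal H$-class of $\K^{n\times n}$ (two matrices are $\mathcal H$-equivalent iff they share both column space and kernel). A maximal subgroup of $\K^{n\times n}$ is precisely an $\mathcal H$-class containing an idempotent. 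Thus the conclusion I am after is: $p$ \emph{consecutive} factors whose $\mu$-images all lie inside one maximal subgroup. Second, appending a letter can only drop rank: writing $P_j=\mu(a_1\cdots a_j)$ for the prefixes of $w=a_1\cdots a_\ell$, the sequence $\mathrm{rank}(P_j)$ is non-increasing, so it takes at most $n+1$ values and stabilizes.

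First I would run a Ramsey argument on a genuinely finite coloring. For $0\le s<t\le\ell$ set $B_{s,t}=\mu(a_{s+1}\cdots a_t)$ and color the pair $\{s,t\}$ by $\mathrm{rank}(B_{s,t})\in\{0,\dots,n\}$. Once $\ell$ exceeds the Ramsey number for $m{+}1$ vertices and $n{+}1$ colors, there are indices $i_0<\cdots<i_m$ on which the coloring is constant, say equal to $r$; this threshold is exactly what forces $N_2$ to depend on $m$ (hence on $p$), on $n$, and, through the encoding of $w$ over $A$, on $|A|$. Writing $C_{s,t}=B_{i_s,i_t}$, the composition law $C_{s,t}C_{t,u}=C_{s,u}$ together with $\mathrm{rank}(C_{s,t})\equiv r$ yields a Rees (sandwich) structure: the image $\mathrm{im}(C_{s,t})=:I_s$ depends only on the left index, the kernel $\ker(C_{s,t})=:K_t$ only on the right index, and for every interior index $s$ the rank-$r$ identity $\mathrm{rank}(C_{s',s}C_{s,s''})=r$ forces $I_s\cap K_s=0$, whence $\K^n=I_s\oplus K_s$.

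The crux, and the main obstacle, is to upgrade this equal-rank structure to a common $\mathcal H$-class for \emph{consecutive} factors. The equal-rank (Rees) structure alone does not suffice: the single consecutive block $C_{s,s+1}$ has image $I_s$ and kernel $K_{s+1}$, and no relation among the $C_{s,t}$ constrains $I_s\cap K_{s+1}$, so these blocks need be neither pseudo-regular nor cofinal in one $\mathcal H$-class. Worse, over an infinite field the images $I_s$ and kernels $K_t$ range over infinite Grassmannians and may be pairwise distinct, so they cannot be equalized by pigeonholing. This is where I expect the real difficulty to lie, and it is the step I would lift from Jacob \cite{Jac:1980}. The resolution is to stop chasing single blocks and instead locate an idempotent inside the semigroup generated by the sandwich: the powers of a fixed long product $C_{0,m}$ have non-increasing rank, hence a stable rank $r'\le r$ with stable image and kernel, producing a pseudo-regular matrix and, after normalization, an idempotent $e$. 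One then re-blocks the relevant stretch of $w$ into factors each mapping into the maximal subgroup $H_e$; these are genuinely pseudo-regular with common image $\mathrm{im}(e)$ and kernel $\ker(e)$. An induction on $n$ absorbs the possible drop $r'<r$ (the base case $r=n$ being trivial, since there any $p$ consecutive letters already lie in $\mathrm{GL}_n(\K)$).

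Finally I would assemble the quantitative bound. Selecting an idempotent costs one rank level and the recursion has depth at most $n$; each level consumes a factor controlled by the preceding Ramsey number, and the innermost step costs a factor growing with $p$. Multiplying these through yields a single finite bound $N_2=N_2(p,n,|A|)$ of the asserted form. I would note in passing that Proposition \ref{th: Jacob N1} is not needed here: Proposition \ref{th: Jacob N2} is the more basic matrix-combinatorial input and is proved independently, with the support condition reinserted only trivially at the end.
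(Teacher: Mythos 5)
You should first note what the paper actually does with this statement: it gives no proof at all. The proposition is quoted as a theorem of Jacob, with \cite[Exer.~3.5.1]{BerReu:2011} as the reference, and is used as a black box in the proof of Lemma~\ref{th: big bound}. So the question is whether your sketch, which attempts more than the paper, stands on its own --- and it does not quite. Its first half is correct and standard: coloring pairs of cut points $\{s,t\}$ by $\mathrm{rank}\,\mu(a_{s+1}\cdots a_t)\in\{0,\dots,n\}$, applying Ramsey, and extracting the Rees picture ($\mathrm{im}\,C_{s,t}=I_s$ depending only on the left index, $\ker C_{s,t}=K_t$ only on the right, with $I_s\oplus K_s=\K^n$ at interior indices) is the right opening move. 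You are also right on two finer points: the hypothesis $w\in\suppx{S}$ is indeed superfluous for this purely matrix-semigroup statement (it is only used downstream, in Lemma~\ref{th: big bound}, to feed Proposition~\ref{th: Jacob N1}); and the equal-rank structure by itself says nothing about $I_s\cap K_{s+1}$, so it makes neither the consecutive blocks pseudo-regular nor $\mathcal{H}$-equivalent.

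The gap sits exactly at your declared crux, and your proposed repair does not work as written. The powers of $C_{0,m}$ are not images of factors of $w$: if $u$ is the factor with $\mu(u)=C_{0,m}$, then $C_{0,m}^2=\mu(uu)$, and $uu$ need not occur in $w$. Hence the idempotent $e$ obtained from the stabilizing powers lives in the semigroup generated by $\mu(A)$ but has no a priori relation to any factor of $w$; and ``one then re-blocks the relevant stretch of $w$ into factors each mapping into $H_e$'' is an assertion, not an argument --- membership in $H_e$ requires exact equality of image with $\mathrm{im}(e)$ and of kernel with $\ker(e)$, which is precisely the data the Ramsey step leaves unconstrained (as you yourself observe, over infinite $\K$ the $I_s$ and $K_t$ range over infinite Grassmannians and cannot be pigeonholed). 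Closing this is the actual content of Jacob's factorization theorem \cite{Jac:1980}, whose proof is a genuinely delicate induction --- compare the exterior-power mechanism behind the improved bound $N(n)=\prod_{i=1}^n\bigl[\binom{n}{i}+1\bigr]$ of \cite{Reu:1980}, \cite{Okn:1998} cited in the paper's remark --- and since you explicitly defer that step to Jacob, your text is in the end a citation with a correct preamble rather than a proof; as such it proves no more than the paper's own one-line attribution. One small inaccuracy besides: the Ramsey step on cut points uses $n+1$ colors and a number of vertices depending on $p$ and $n$ only, so it does not, contrary to your parenthetical, account for the dependence of $N_2$ on $|A|$; that dependence enters elsewhere in Jacob's argument, through the finiteness of the generating set $\mu(A)$.
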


\begin{proof}[Proof of Lemma \ref{th: big bound}]
Let $S$ be a rational series whose support is included in $\red\XX$. We may assume that $\suppx{S}$ is well-ordered, upon identifying $\fgx{X}$ with $\red\XX$. Fix some representation $(\lambda, \mu, \rho)$ of $S$ of dimension $n$. 

Note that if $u,v',v''$ are elements of the free group with $u<1$, then $\suppx{v'u^*v''}$ has no smallest element (since this is true for $u^*$). The same is true even for each infinite subset of $\suppx{v'u^*v''}$.
From this we deduce that if $u,v',v''$ are words in $\red\XX$, then $\suppx{S} \cap \suppx{v'u^*v''}$ cannot be infinite. Now, it follows from the proof in \cite{BerReu:2011} of Proposition \ref{th: Jacob N1} that if $\mu(u)$ is pseudo-regular and ${v'uv''} \in \suppx{S}$, then $\suppx{v'u^*v''} \cap \suppx{S}$ is infinite. Thus, if $\mu(u)$ is pseudo-regular, and $v'uv'' \in\suppx{S}$, then $u>1$.

Using Proposition \ref{th: Jacob N2}, we see that if a word $w$ in $\suppx{S}$ is long enough, then it has a factor $u=u_1\dotsb u_n$ such that each $\mu(u_i)$ are pseudo-regular. Hence each $u_i>1$ and the lemma is proven.
\end{proof}

\begin{rema}
Jacob's bounds are extremely large. In \cite{Reu:1980}, see also \cite[Th. 1.12]{Okn:1998}, a common bound, better than $N_1$ and $N_2$, is given: $N(n) = \prod_{i=1}^n \left[\binom{n}{i}+1\right]$. However, it is still rather large (and proveably too large for $N_1$). It may be possible that one could further improve $N_2$ as well. 
\end{rema}


\appendix
\section{Using Euler's Identity}\label{sec: Euler}

In the introduction, we have claimed that Euler's identity is sometimes useful for verifying identities in the free skew field. Here we illustrate with \eqref{eq: rational identity} after first indicating how this identity was found.

\subsection{Skew-symmetry of quasi-Pl\"ucker coordinates}
Given an $n\times n$ matrix $A$, a row index $i$, and a column index $j$, the \demph{$(i,j)$-quasideterminant} is defined if the $(n{-}1)\times (n{-}1)$ submatrix $A^{i,j}$ is invertible. In that case, we put
\[
	\left|A\right|_{ij} = A_{i,j} - A_{i,[n]\setminus j} \cdot \left(A^{i,j}\right)^{-1} \!\cdot A_{[n]\setminus i,j}\,.
\]
Here, subscripts represent row and column indices of $A$ to keep (when building submatrices) and superscripts represent indices of $A$ to delete. For example,
\[
	\left|A\right|_{2,1} = \left|
	\begin{array}{@{}ccc@{}} a & b & c \\   \fbox{$u$} & v & w \\   x & y & z \end{array}
	\right| = u - \begin{pmatrix} v & w\end{pmatrix} \cdot \begin{pmatrix}b & c \\ y & z\end{pmatrix}^{-1} \!\!\cdot \begin{pmatrix}a \\ x \end{pmatrix}.
\]

Gelfand and Retakh \cite{GelRet:1991} introduced quasideterminants as a replacement for the determinant in noncommutative settings. 
Since that time, they have proven useful in a number of different settings \cite{GKLLRT:1995,MolRet:2004,DiK:2011}. Specific to the present discussion is the use of \emph{quasi-Pl\"ucker coordinates} to describe coordinate rings for quantum flags and Grassmannians \cite{Lau:2010}. 

Given an $n\times m$ matrix $A$ with $n<m$, fix a choice $K\subseteq[m]$ of $n-1$ columns of $A$, and let $i,j$ be two additional columns ($i\notin K$). Fix, also, a row $r$. The associated \demph{quasi-Pl\"ucker coordinate,} introduced by Gelfand and Retakh, is defined by 
\[
   p_{ij}^K = \left(\bigl|A_{[n],i\cup K}\bigr|_{r,i}\right)^{-1} \cdot \bigl|A_{[n],j\cup K}\bigr|_{r,i} \,,
\]
and is independent of $r$ (when the ratio is well-defined) \cite[Sec. 4]{GGRW:2005}. 
%
%
The quasi-Pl\"ucker coordinates reduce to ratios of classical Pl\"ucker coordinates 
in the commutative setting. As such, analogs of the celebrated Pl\"ucker relations, skew-symmetry relations, and more may be expected to hold. And they do. The skew-symmetry relations  \cite[Th. 4.4.1]{GGRW:2005} take the form 
\begin{equation}\label{eq: skew-symmetry}
	p_{ij}^{k\cup L}\cdot p_{jk}^{i\cup L} = - p_{ik}^{j\cup L} ,
\end{equation}
where $i,j,k,L$ indicate column indices of an $n\times m$ matrix, with $m>n$, $|L|=n-2$, and $\{i,j,k\}\cap L = \emptyset$. 
Identity \eqref{eq: rational identity} is a special instance of skew-symmetry, using the $2\times3$ matrix
\[
		A= \begin{pmatrix}
			1 & x & 1 \\
			y & 1 & z 
		\end{pmatrix}.
\]
(Here $L=\emptyset$.) Taking $(i,j,k)=(1,2,3)$, the skew-symmetry relation \eqref{eq: skew-symmetry} becomes
\begin{gather*}
	\left|\begin{array}{cc}\fbox{$1$}&1\\y&z\end{array}\right|^{-1}
		\left|\begin{array}{cc}\fbox{$x\rule[0ex]{0pt}{1.4ex}$}&1\\1&z\end{array}\right|
\,\cdot\,	\left|\begin{array}{cc}x&1\\\fbox{$1$}&y\end{array}\right|^{-1}
		\left|\begin{array}{cc}1&1\\\fbox{$z\rule[0ex]{0pt}{1.4ex}$}&y\end{array}\right|
\ =\ -	\left|\begin{array}{cc}1&x\\\fbox{$y$}&1\end{array}\right|^{-1}
		\left|\begin{array}{cc}1&x\\\fbox{$z\rule[-.2ex]{0pt}{1.4ex}$}&1\end{array}\right| ,
\intertext{or}
\big(y\inv-z\inv\big)\inv\big(x-z\inv\big)\big(1-yx\big)\inv\big(y-z\big)  
\ =\ - \big(1-x\inv y\inv \big)\inv\big(x\inv-z\big) .
\end{gather*}

\subsection{Using Euler}\label{sec: using euler}

Using geometric series, the star notation, and our barred variables shorthand, we may rewrite \eqref{eq: rational identity} (and the above) as
\[
(x-\overline z) \,{(yx)}^* \, (y-z)  
\ = \ - (\overline y-\overline z)\, {(\overline x\,\overline y)}^* \, (\overline x-z) .
\]
We leave it to the reader to verify this identity by distributing products and equating terms. (\emph{Hint:} implicit in Euler's identity is the equality ${\overline a}^* = -aa^*$.)


\section{Extensions \& Open Problems}\label{sec: Extensions}
We collect some possible extensions of the results presented in Section \ref{sec: rank}, as well as some open problems.

\subsection{Extending the main theorem} In what generality does Theorem \ref{th: new_connes_criterion} hold?

Let us replace the field $\K$ by any topological ring $R$ and say that the Cauchy product $a\cdot b$ is well-defined for $a,b\in R^\fg$ if
\begin{gather*}
(ab,\w) := \sum_{\substack{\alpha,\beta\in \fg \\ \alpha\beta=\w}} (a,\alpha)(b,\beta)
\end{gather*}
converges (in $R$) for all $\w\in \fg$. 
\begin{prob} 
Do Proposition \ref{th: connes identities} and Lemma \ref{th: keylemma}, which use the discrete topology on $\K$, hold in this more general setting? 
\end{prob}

Let $R$ be a (not necessarily commutative) ring. A free $R$-module $M$ has \emph{invariant basis number} (IBN) if any two bases of $M$ have the same cardinality. If every free $R$-module has this property, we say that $R$ is an \demph{IBN ring.}\footnote{Commutative rings, noetherian rings and division rings are among the chief examples. See \cite[Ch. 1.4]{Coh:1995} for more information.} 

\begin{prob} 
Does Lemma \ref{th: closed} hold if the field $\K$ is replaced by an IBN ring $R$?
\end{prob}

It would seem, then, that the proofs of Theorem \ref{th: new_connes_criterion} and Proposition \ref{th:rank bounds} extend to the setting of topological IBN rings. 

\begin{prob}\label{th: extension} Let $R$ be any topological IBN ring. Is it true that an element $a\in R^\fg$ belongs to the rational closure of $R\fg$ in $R^\fg$ iff the operator $\FF{a}$ is of finite rank?
\end{prob}

\subsection{Rank and image of the Connes operators}

Hand calculations suggest that the bounds given in Proposition \ref{th:rank bounds} are far from tight beyond rank one.

\begin{prob} Find a family of examples showing the bound is tight beyond rank one, or find a different tight bound beyond rank one. 
\end{prob}

Using \eqref{eq: Fab} and \eqref{eq: Fa*}, hand calculations further suggest that the image of $\FF{a}$ is spanned by 
\[
\Bigl\{\FF{a}\{\pp{\w},\w\} : \overline{\w}\in\suff{a}\Bigr\} ,
\]
where $\suff{a}$ denotes the (finite) set of all suffixes of all monomials used in a $\ast$-rational expression for $a$. 

\begin{prob}
Determine if this is indeed the case. 
\end{prob}

\begin{rema}
A similar problem could be posed for the Hankel operator $\circc a$, with the answer perhaps being found ``between the lines'' of existing proofs of the equivalence of \emph{rational} and \emph{representable} series.
\end{rema}


\section{Lyndon Words in the Magnus Ordering}\label{sec: Lyndon}

The results in this appendix are not needed for any other result in the paper. Still, they were found during the search for a proof of Proposition \ref{th: min-supp}, so this paper seems like the best place to share them. Our main result is Theorem \ref{th: compare lyndons}; the corresponding algorithm appears in Section \ref{sec: lyndon representations}.

\subsection{The Magnus ordering}
The standard proof that a free group may be ordered computes its lower central series, orders the corresponding (free abelian) quotient groups, then patches these orderings together to build an ordering of the free group \cite[Th. 7.3]{Sak:2009}. However, there is a very elementary means of ordering $\fgx{X}$ that requires only that the mapping $\Mag: \fgx{X} \to \Z\llangle X\rrangle$ given by
\[
	x \mapsto 1+x \ \ (\hbox{for all }x\in X)
\]
is an embedding. This is proven in \cite[Th. 5.6]{MagKarSol:1966}. 

Let $<$ be a total ordering on the free monoid $X^*$. (We use here the \demph{military ordering:} length plus lexicographic.) Extend $<$ to a total ordering of $\Z\llangle X\rrangle$ by using the fact that $\Z$ is well-ordered. Putting $x<y$, we have, e.g., 
\[
	-3x -2y + yx < -3x + yx < y -2yx < y + 2xy
\]
(the first inequality because $-2y < 0y$, the second because $-3x < 0x$, the third because $xy<yx$ and $0xy < 2xy$). Finally, to order $\w,\uu \in \fg$, say $\w\prec \uu \iff \Mag(\w) < \Mag(\uu)$. Thus, e.g., $x \succ y $ because
\[
	M(x) = 1 + 1x + 0y \qand M(y) = 1 +0x + 1y.
\]
Similarly, the reader may verify that $xy^{-1} \prec y^{-1}x$.

\begin{rema}
While the ordering of $\fgx{X}$ just described is customarily called the \emph{Magnus ordering,} the first proof we have been able to find belongs to Bergman \cite{Berg:1990}. 
\end{rema}

\subsection{The subword function}
Given an alphabet $A$, and words $w,v \in A^*$, the \demph{subword function} $\binom{w}{v}$ returns the number of occurrences of $v$ as a subword (i.e., subsequence) of $w$ \cite{Eil:1974}. Note that $\binom{x^n}{x^l} = \binom{n}{l}$, so this is a natural generalization of binomial coefficients to words. 

We make a connection to the Magnus transformation to extend the first argument of the subword function to the free group. If $\w\in X^*$, it is clear that 
\[
	\Mag(\w) = \sum_{v\subseteq \w} \binom{\w}{v} v \,.
\]
See \cite[Prop. 6.3.6]{Lot:1997}. For $\w\in \fgx{X}$ and $v\in X^*$, define $\binom{\w}{v}$ to be the coefficient of $v$ in $\Mag(\w)$. The extension of $\binom{\raisebox{-.25\height}{$\,\arg\,$}}{v}$ from $X^*$ to $\fg$ is continuous with respect to the \emph{profinite topology} on $\fg$.%
\footnote{This is the coarsest topology such that all homomorphisms $\phi \colon \fg \to G$ to finite, discrete groups are continuous. See \cite{Hall:1950} or \cite{MelReu:1993} for details.} 
The following evident result will be useful in what follows.

\begin{lemm}\label{th: subword-check} Let $1=v_0<v_1<\dotsb$ be the total ordering of $X^*$. If $\w,\w'\in\fgx{X}$, then $\w \prec \w'$ if and only if there is some $p\in\N$ with
\[
	\binom{\w}{v_i} = \binom{\w'}{v_i} \ \hbox{for all }i<p, \hbox{ and }\ 
	\binom{\w}{v_p} < \binom{\w'}{v_p} .
\]
\end{lemm}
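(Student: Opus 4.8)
The plan is to unwind the two orderings in play---the group ordering $\prec$ on $\fg$ and the coefficient-wise ordering on $\Z\llangle X\rrangle$---and to observe that the stated condition is literally their composite. First I would recall that, by definition, $\w \prec \w'$ holds precisely when $\Mag(\w) < \Mag(\w')$ in $\Z\llangle X\rrangle$, and that $\binom{\w}{v}$ was defined as the coefficient of $v$ in $\Mag(\w)$. Thus $\binom{\w}{v_i}$ is exactly the $v_i$-coefficient of $\Mag(\w)$, and likewise for $\w'$, so the data appearing in the lemma is nothing more than the sequence of coefficients of the two Magnus images read off in the order $v_0 < v_1 < \dotsb$.

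Next I would make precise the ordering on $\Z\llangle X\rrangle$ that the worked examples in the text describe only implicitly: for $f,g \in \Z\llangle X\rrangle$ with $f \neq g$, one declares $f < g$ if and only if, at the $<$-least word $v_p$ for which $(f,v_p) \neq (g,v_p)$, one has $(f,v_p) < (g,v_p)$ in $\Z$. The single point requiring justification is that this least index $p$ exists. Since the military ordering enumerates $X^*$ as $1 = v_0 < v_1 < \dotsb$ in order type $\N$ (each word has only finitely many predecessors), the set $\{i : (f,v_i) \neq (g,v_i)\}$ is a nonempty subset of $\N$ and so has a minimum; this is what legitimizes the informal phrase ``the first word where the coefficients differ.''

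With these observations in hand the equivalence is immediate: $\w \prec \w'$ iff $\Mag(\w) < \Mag(\w')$ iff there is a $p$ with $(\Mag(\w),v_i) = (\Mag(\w'),v_i)$ for all $i<p$ and $(\Mag(\w),v_p) < (\Mag(\w'),v_p)$, which reads precisely as $\binom{\w}{v_i} = \binom{\w'}{v_i}$ for $i<p$ together with $\binom{\w}{v_p} < \binom{\w'}{v_p}$. To secure the forward direction I would invoke the injectivity of $\Mag$ (it is an embedding, per \cite{MagKarSol:1966}), which guarantees $\Mag(\w) \neq \Mag(\w')$ whenever $\w \neq \w'$ and hence that the set of differing indices is genuinely nonempty; the reverse direction needs no such input, since the existence of a witnessing $p$ already forces $\Mag(\w) \neq \Mag(\w')$. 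There is no real obstacle here---the lemma is, as the text says, evident---so the only care needed is to state the order on $\Z\llangle X\rrangle$ explicitly and to record the well-ordering of the index set.
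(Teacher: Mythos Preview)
Your proposal is correct. The paper itself offers no proof, declaring the result ``evident,'' and your argument is precisely the unwinding of definitions that justifies that label: you identify $\binom{\w}{v_i}$ with the $v_i$-coefficient of $\Mag(\w)$, spell out the lexicographic order on $\Z\llangle X\rrangle$ along the military enumeration $v_0<v_1<\dotsb$, and note that order type $\N$ guarantees a least differing index.
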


\subsection{Lyndon words}
Recall that a word $\ell \in X^*$ is \demph{Lyndon} if it is lexicographically smaller than all of its cyclic permutations.  Every word $v\in (X^*,<)$ has a unique decomposition of the form
\[
	v = \ell_1^{m_1} \ell_2^{m_2} \dotsb \ell_r^{m_r} \qquad (\ell_1 > \ell_2 > \dotsb > \ell_r, \ m_j\geq1) ,
\]
where the $\ell_i$ are Lyndon \cite[Ch. 5]{Lot:1997}. 

Our main result is a restriction of the search space in Lemma \ref{th: subword-check} from all words in $X^*$ to Lyndon words.

\begin{theo}\label{th: compare lyndons}
Let $\ell_1< \ell_2<\dotsb$ denote the Lyndon words in $X^*$, ordered with respect to the military ordering. If $\w,\w'\in\fgx{X}$, then with respect to the Magnus ordering, $\w \prec \w'$ if and only if there is some $p\in\N$ with
\[
	\binom{\w}{\ell_i} = \binom{\w'}{\ell_i} \ \hbox{for all }i<p, \hbox{ and }\ 
	\binom{\w}{\ell_p} < \binom{\w'}{\ell_p} .
\]
\end{theo}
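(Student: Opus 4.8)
The plan is to reduce the statement to Lemma~\ref{th: subword-check} by proving the following: the \emph{smallest} word $v$ (in the military order $<$ on $X^*$) at which the coefficient sequences of $\Mag(\w)$ and $\Mag(\w')$ disagree is necessarily a Lyndon word. Granting this, suppose first $\w\neq\w'$. Since $\Mag$ is injective, such a smallest differing word $v_q$ exists, and Lemma~\ref{th: subword-check} gives $\w\prec\w'\iff\binom{\w}{v_q}<\binom{\w'}{v_q}$. As $v_q$ is Lyndon and every Lyndon word preceding it in the military order is a word of agreement, $v_q=\ell_p$ is also the smallest Lyndon word of disagreement; thus both sides of the asserted equivalence collapse to the single inequality $\binom{\w}{\ell_p}<\binom{\w'}{\ell_p}$. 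When $\w=\w'$ both sides fail, so the equivalence holds trivially.

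The whole argument therefore rests on a triangularity statement in the free algebra $\Z\langle X\rangle$. Fix a word $v$ with Lyndon factorization $v=\ell_1^{m_1}\dotsb\ell_r^{m_r}$, $\ell_1>\dotsb>\ell_r$. I would combine two classical facts about the shuffle and infiltration products (see \cite{Lot:1997}). First, Radford's theorem expands the shuffle monomial triangularly over the word basis, $\ell_1^{\shuf m_1}\shuf\dotsb\shuf\ell_r^{\shuf m_r}=(m_1!\dotsb m_r!)\,v+\sum_{u}d_u\,u$, where each word $u$ has the same content as $v$ (hence the same length) but is lexicographically smaller, so $u<v$ in the military order. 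Second, the infiltration product refines the shuffle by $u\infl v=u\shuf v+(\text{strictly shorter words})$, and shorter words are again military-smaller. Putting these together yields a single identity in $\Z\langle X\rangle$,
\[
  \ell_1^{\infl m_1}\infl\dotsb\infl\ell_r^{\infl m_r}=(m_1!\dotsb m_r!)\,v+\sum_{w<v}c_w\,w, \qquad c_w\in\Z .
\]

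Now I would apply the linear form $\binom{\w}{\arg}$ (the coefficient functional of $\Mag(\w)$, extended linearly to $\Z\langle X\rangle$) to this identity. The key point is that this functional is an \emph{infiltration character}: $\binom{\w}{u}\binom{\w}{v}=\binom{\w}{u\infl v}$ for all $\w\in\fgx{X}$. This is classical for $\w\in X^*$ and persists over the whole free group because $\Mag$ is a group homomorphism into the infiltration-grouplike elements, $\Mag(x)=1+x$ being grouplike. Consequently the left-hand side becomes $\prod_i\binom{\w}{\ell_i}^{m_i}$, and we obtain
\[
  (m_1!\dotsb m_r!)\binom{\w}{v}=\prod_{i=1}^r\binom{\w}{\ell_i}^{m_i}-\sum_{w<v}c_w\binom{\w}{w}.
\]
If $v$ is not itself Lyndon then every $\ell_i$ is strictly shorter than $v$, so the entire right-hand side is a polynomial in the numbers $\binom{\w}{w}$ with $w<v$. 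Hence, if $\binom{\w}{w}=\binom{\w'}{w}$ for all military-smaller $w$, dividing by the nonzero integer $m_1!\dotsb m_r!$ forces $\binom{\w}{v}=\binom{\w'}{v}$. Applied to $v=v_q$, the smallest word of disagreement, this is a contradiction unless $v_q$ is Lyndon, which is exactly the claim above.

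The step I expect to be the main obstacle is keeping the two distinct triangularities aligned against one order. Radford's errors live among words of a fixed length and are controlled lexicographically, whereas the infiltration corrections are genuinely shorter; it is precisely the \emph{military} order (length, then lex) that dominates both families simultaneously, and checking that every correction term indeed lands strictly below $v$ in that order is the delicate bookkeeping. A lesser point requiring care is the passage of the infiltration-character identity from $X^*$ to $\fgx{X}$, which I would dispatch once via the grouplike description of $\Mag$ just indicated.
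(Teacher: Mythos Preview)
Your proof is correct and follows essentially the same route as the paper: both arguments combine the Chen--Fox--Lyndon infiltration-character identity (your $\binom{\w}{u}\binom{\w}{v}=\binom{\w}{u\infl v}$, the paper's equation~\eqref{eq: CFL}) with Radford's triangularity (Lemma~\ref{th: Radford}) to show that $\binom{\arg}{v}$ is determined by its values at military-smaller words whenever $v$ is not Lyndon, then invoke Lemma~\ref{th: subword-check}. Your framing as ``the smallest word of disagreement is Lyndon'' is a clean repackaging of the paper's induction, and your explicit bookkeeping of the factor $m_1!\dotsb m_r!$ and of the two triangularities (same-length lex errors from Radford, shorter-length errors from infiltration-minus-shuffle) is slightly more careful than the paper's terse display.
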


To prove this theorem, we show that the subword function $\binom{\w}{v}$ on $\fgx{X}$ may be computed using only its values on Lyndon words $\binom{\w}{\ell}$. 
As an illustration, given any $\w \in X^*$ and $x<y\in X$, a simple computation shows that
\begin{gather*}
	\binom{\w}{y}\!\binom{\w}{x}  = \binom{\w}{yx} + \binom{\w}{xy} \qand
	\binom{\w}{xy}\!\binom{\w}{x} = \binom{\w}{xyx} + 2\binom{\w}{xxy} + \binom{\w}{xy} ,
\end{gather*}
so knowing the values of $\binom{\w}{\raisebox{.25\height}{$\,\arg\,$}}$ on the Lyndon words $x, y, xy$, and $xxy$ is enough to recover the values $\binom{\w}{yx}$ and $\binom{\w}{xyx}$. 
%
\begin{rema}
After formulating our proof of this statement, we discovered the same proof (for $X^*$) in the unpublished thesis of P\'eladeau \cite[Prop. 5.2.17]{Pel:1986}.
\end{rema}
%
To see the above examples through to a proof of Theorem \ref{th: compare lyndons}, we need 
the \emph{infiltration product} of Chen--Fox--Lyndon. (Briefly, $u \infl v$ is ``shuffle, plus overlap''. See \cite[Ch. 6]{Lot:1997} for details.) For example, 
\[
	xy \infl x = 2xxy + xyx + xy .
\]
In general, the leading term in $u \infl v$ is the same as that in $u \shuf v$ for our ordering of $X^*$. Now, \cite[Th. 3.9]{CFL:1958}\footnote{The recapitulation in \cite[Ch. 6]{Lot:1997} takes $\w \in X^*$, but the original theorem is stated in the generality that we need here.} has that for all $\w \in \fgx{X}$ and $t,u\in X^*$,
\begin{equation}\label{eq: CFL}
	\binom{\w}{t}\!\binom{\w}{u} = \sum_{v\in X^*} (t\infl u,v)\binom{\w}{v} .
\end{equation}

We also need the following result of Radford \cite[Th. 3.1.1]{Rad:1979} relating the concatenation product and shuffle product $\shuf$.

\begin{lemm}\label{th: Radford} 
If $\ell_1^{m_1} \dotsb \ell_r^{m_r}$ is the Lyndon decomposition of a word $v\in X^*$, then 
\[
	\frac{1}{m_1!\dotsb m_r!} \, \ell_1^{\shuf m_1} \shuf \dotsb \shuf  \ell_r^{\shuf m_r} \ = \  v \ + \ (\hbox{smaller words w.r.t. }{<}). 
\]
\end{lemm}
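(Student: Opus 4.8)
The plan is to deduce the statement from a single \emph{leading-term computation}: I will show that, with respect to the military order $<$ on $X^*$, the shuffle $\ell_1^{\shuf m_1}\shuf\dotsb\shuf\ell_r^{\shuf m_r}$ has $v=\ell_1^{m_1}\dotsb\ell_r^{m_r}$ as its unique lex-greatest term, occurring with coefficient exactly $m_1!\dotsb m_r!$. Every word occurring in this shuffle has the same length $|v|$, so on these words the military order coincides with the plain lexicographic order; the ``smaller words'' of the statement are therefore exactly the lex-smaller words of length $|v|$, and the Lemma is equivalent to the leading-term claim.

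The engine of the argument is the defining property of Lyndon words \cite[Ch. 5]{Lot:1997}: a word $\ell$ is Lyndon if and only if $\ell<s$ for every proper nonempty suffix $s$ of $\ell$. Combined with the recursive rule $a u\shuf b w=a\,(u\shuf bw)+b\,(au\shuf w)$ for the shuffle (with $a,b\in X$), this lets me run a greedy/exchange analysis of how a lex-maximal interleaving is assembled letter by letter. The key sub-lemma I would isolate is the two-factor statement: \emph{if $\ell$ is Lyndon and $w$ is a word whose Lyndon (CFL) factors are all $\le\ell$, then $\ell w$ is the lex-greatest word of $\ell\shuf w$.} The point is that whenever a maximal interleaving has emitted a proper prefix $p$ of a copy of $\ell$ and is tempted to switch to another factor, the remaining suffix $s$ (with $\ell=ps$) satisfies $s>\ell\ge$ any competing factor, by the suffix characterization; hence it is never advantageous to interrupt a copy of $\ell$, and the maximal interleaving emits $\ell$ as an initial block and then recurses.

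From the two-factor sub-lemma I would pass to the general case by induction on the number $n=m_1+\dots+m_r$ of Lyndon factors: peel off the largest factor $\ell_1$, apply the inductive hypothesis to the remaining shuffle $W:=\ell_1^{\shuf(m_1-1)}\shuf\ell_2^{\shuf m_2}\shuf\dotsb\shuf\ell_r^{\shuf m_r}$ to learn its leading term $v':=\ell_1^{m_1-1}\ell_2^{m_2}\dotsb\ell_r^{m_r}$ (with coefficient $(m_1-1)!\,m_2!\dotsb m_r!$), and then analyze $\ell_1\shuf W$. For this step I need a \emph{monotonicity} statement: shuffling by $\ell_1$ respects the order-filtration, so that only the leading term $v'$ of $W$ can contribute to the top of $\ell_1\shuf W$ while the lower terms of $W$ produce only lex-smaller words. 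Granting this, the leading term of $\ell_1\shuf W$ equals that of $\ell_1\shuf v'$, namely $\ell_1 v'=v$, and its coefficient is multiplied by the number of block placements of one copy of $\ell_1$ inside $v$, which is $m_1$; this yields the advertised coefficient $m_1!\dotsb m_r!$.

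I expect the coefficient bookkeeping to be the main obstacle. Establishing that $v$ is lex-maximal is a relatively clean consequence of the suffix property, but proving that it occurs \emph{exactly} $m_1!\dotsb m_r!$ times requires ruling out the ``overlapping'' (non-block) placements of the $\ell_i$ that could also tile $v$, as well as any contribution to $v$ coming from strictly smaller terms of $W$ under the final shuffle by $\ell_1$. Both are controlled by the uniqueness of the CFL factorization of $v$ together with the strict inequality $s>\ell$ for proper suffixes, but making the cancellation and the counting rigorous is where the real work lies; this is precisely the content packaged in Radford's theorem \cite[Th. 3.1.1]{Rad:1979}, which may alternatively be cited directly.
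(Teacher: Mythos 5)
The paper offers no proof of this lemma at all: it is imported verbatim as Radford's theorem, with the citation \cite[Th. 3.1.1]{Rad:1979} serving as the entire argument. So your closing remark---that the hard counting ``may alternatively be cited directly''---is in fact exactly what the paper does, and to that extent your proposal and the paper coincide. What you add is a sketch of the standard direct proof of the triangularity statement (the route found in the Lyndon-word literature, cf.\ \cite[Ch.~5--6]{Lot:1997}), and its skeleton is sound: the reduction to a leading-term claim is legitimate, since every word in the support of the shuffle has length $|v|$, where the military order is plain lexicographic order; and your monotonicity step is correct and easily made rigorous---applying a fixed interleaving pattern to $u$ and to a lex-larger word $u'$ of the same length yields output words in the same strict relation, so each term $u<v'$ of $W$ produces only words strictly below $\ell_1 v'=v$, which simultaneously shows such terms cannot beat $v$ and cannot contribute to its coefficient.

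Two places in your sketch are, however, not yet proofs. First, the greedy/exchange justification of the two-factor sub-lemma is incomplete as stated: a lex-maximal interleaving of $\ell$ and $w$ can sit mid-way through \emph{both} words, and your comparison ($s>\ell\ge$ ``any competing factor'') pits a suffix of $\ell$ against a whole factor of $w$, whereas the competing continuation is a suffix of $w$ that may begin in the middle of one of its factors; the standard repair is an induction on $|\ell|+|w|$ using the suffix characterization, not a letter-by-letter greedy. Second, the coefficient count---that exactly $m_1$ subsequence-occurrences of $\ell_1$ in $v$ delete to $v'$, ruling out non-block tilings---is asserted rather than shown; this is where the uniqueness of the CFL factorization \cite{CFL:1958} genuinely enters, and you concede the point. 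Finally, a caution specific to this paper: every comparison in your engine (the ordering $\ell_1>\dotsb>\ell_r$ of the CFL factors, the suffix inequality $s>\ell$, the hypothesis ``factors $\le\ell$'') must be read in the \emph{lexicographic} order, whereas the ambient order on $X^*$ here is military; the two disagree on words of different lengths (e.g.\ $xy<y$ lexicographically, but $y<xy$ militarily), so your sub-lemma's hypothesis would actually fail for CFL factorizations if its inequalities were read in the military order. Only the final statement's ``smaller words w.r.t.\ $<$'' is military, and there the two orders agree because the lengths are equal.
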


\begin{proof}[Proof of Theorem \ref{th: compare lyndons}]
After Lemma \ref{th: subword-check}, we may compare $\w$ and $\w'$ by comparing the functions $\binom{\w}{\raisebox{.25\height}{$\,\arg\,$}}$ and $\binom{\w'}{\raisebox{.25\height}{$\,\arg\,$}}$. Given $v\in X^*$, compute its Lyndon decomposition $\ell_{i_1}\dotsb \ell_{i_r}$ (with $\ell_{i_j}\geq \ell_{i_{k}}$ for all $j<k$). Using \eqref{eq: CFL} and Lemma \ref{th: Radford}, we have
\[
	\binom{\raisebox{-.25\height}{$\,\arg\,$}}{v} = 
	\prod_{j}\binom{\raisebox{-.25\height}{$\,\arg\,$}}{\ell_{i_j}} 
	- \bigl(\hbox{\it terms $\displaystyle \binom{\raisebox{-.25\height}{$\,\arg\,$}}{u}$ with $u<v$}\bigr).
\]
Now, either $\Mag(\w)$ and $\Mag(\w')$ agree on these words $u$, in which case they have no bearing on the comparison of $\binom{\w}{v}$ and $\binom{\w'}{v}$, or they differ, in which case the determination of whether or not $\w \prec \w'$ has already been made. In any case, induction completes the proof. 
\end{proof}

\subsection{Effective computation}\label{sec: lyndon representations}
Finally, we indicate via example how to effectively compute $\binom{\w}{\ell}$ for $\w \in \fgx{X}$ and $\ell$ a Lyndon word in $X^*$. Consider $\ell = xy$. Note that
\[
	xy \shuf X^* = \sum_{w\in X^*} \binom{w}{xy} w \,.
\]
An automaton recognizing the series $xy \shuf X^*$ is shown in Figure \ref{fig: subword computation}. 
\begin{figure}[!hbt]
  \centering
    \unitlength=3.25pt
    \begin{picture}(30, 15)(0,0)
    \gasset{Nw=5,Nh=5,Nmr=2.5,curvedepth=0,loopdiam=6}
    \thinlines
    \node[Nmarks=i,iangle=180](A1)(0,10){{\small\sf0}}
    \drawloop[loopangle=270](A1){$x,y$}
    \node(A2)(15,10){{\small\sf1}}
    \drawloop[loopangle=270](A2){$x,y$}
    \node(A3)(30,10){{\small\sf2}}
    \rmark[rdist=.4](A3)
    \drawloop[loopangle=270](A3){$x,y$}
    \drawedge(A1,A2){$x$}
    \drawedge(A2,A3){$y$}
    \end{picture}
\caption{An automaton recognizing the series $xy\shuf X^*$.}
\label{fig: subword computation}
\end{figure}
An equivalent representation $(\lambda,\mu,\rho)$ of the series is as follows:
\[
	\lambda = \begin{pmatrix} 1 & 0 & 0 \end{pmatrix} \!,	
	\quad
	\mu(x) = \begin{pmatrix} 1 & 1 & 0\\ 0 & 1 & 0\\ 0 & 0 & 1\end{pmatrix} \!,
	\quad
	\mu(y) = \begin{pmatrix} 1 & 0 & 0\\ 0 & 1 & 1\\ 0 & 0 & 1\end{pmatrix} \!,
	\qand
	\rho = \begin{pmatrix}0\\ 0\\ 1\end{pmatrix} \!.
\]
Thus $\binom{w}{xy}$ is recovered by taking the $(1,3)$ entry of $\mu(w)$. By continuity, the same is true of $\binom{\w}{xy}$ for any $\w\in\fg$. Since there are effective algorithms for producing automata and representations, this gives an effective means of determining whether or not $\w \prec \w'$ in $\fg$.

\newcommand{\etalchar}[1]{$^{#1}$}
\def\cprime{$'$}
\providecommand{\bysame}{\leavevmode\hbox to3em{\hrulefill}\thinspace}
\providecommand{\MR}{\relax\ifhmode\unskip\space\fi MR }
\providecommand{\MRhref}[2]{%
  \href{http://www.ams.org/mathscinet-getitem?mr=#1}{#2}
}
\providecommand{\href}[2]{#2}

\end{document}